\newtheorem{theorem}{Theorem}[section]
\newtheorem{lemma}{Lemma}[section]
\newtheorem{corollary}{Corollary}[section]
\theoremstyle{definition}
\newtheorem{definition}{Definition}[section]
\def\B{{\cal B}}
\def\sbmatrix{\left[\begin{array}}
\def\endsbmatrix{\end{array}\right]}
\begin{document}

\title{Weighted weak group inverse for Hilbert space operators}

\author{Dijana Mosi\' c\footnote{This author is supported by the Ministry of Education, Science and Technological Development,
Republic of Serbia, grant no. 174007.}, Daochang
Zhang
\footnote{This author is supported by the National Natural
Science Foundation of China (NSFC) (No. 61672149; No. 51507029;
No. 61503072), and the Scientific and Technological Research
Program Foundation of Jilin Province, China (No.
JJKH20190690KJ).}}

\date{}

\maketitle

\begin{abstract} We present the weighted weak group inverse,
which is a new generalized inverse of operators between two
Hilbert spaces, introduced to extend weak group inverse for square
matrices. Some characterizations and representations of the
weighted weak group inverse are investigated. We also apply these
results to define and study the weak group inverse for a Hilbert
space operator. Using the weak group inverse, we define and
characterize various binary relations.

\medskip

{\it Key words and phrases\/}: weak group inverse, weighted
core--EP inverse, Wg-Drazin inverse, Hilbert space.

2010 {\it Mathematics subject classification\/}: 47A62, 47A05,
15A09.
\end{abstract}


\section{Introduction}

Throughout this paper, let $\B(X,Y)$ be the set of all bounded
linear operators from $X$ to $Y$, where $X$ and $Y$ are
infinite-dimensional complex Hilbert space. In the case that
$X=Y$, we set $\B(X)=\B(X,X)$. For $A\in\B(X,Y)$, $A^*$, $N(A)$,
$R(A)$ and $\sigma(A)$ represent the adjoint of $A$, the null
space, the range and the spectrum of $A$, respectively. We call
$P\in\B(X)$ an idempotent if $P^2=P$, and an orthogonal projector
if $P^2=P=P^*$. If $L$ and $M$ are closed subspaces, we denote by
$P_{L,M}$ an idempotent on $L$ along $M$, and by $P_L$ an
orthogonal projector onto $L$.

Let $A\in\B(X,Y)\backslash\{0\}$. There always exists
$B\in\B(Y,X)\backslash\{0\}$ such that $BAB=B$, which is not
unique in general and it is called an outer inverse of $A$. The
outer inverse is uniquely determined if we fix its range and
kernel. For a subspace $T$ of $X$ and a subspace $S$ of $Y$, the
unique outer inverse $B$ of $A$ with the prescribed range and the
null space $S$ will be denoted by $A^{(2)}_{T,S}$. We now present
some special classes of outer inverses.

For a fixed operator $W\in\B(Y,X)\backslash\{0\}$, an operator
$A\in\B(X,Y)$ is called Wg--Drazin invertible \cite{AK} if there
exists a unique operator $B\in\B(X,Y)$ (denoted by $A^{d,w}$) such
that
$$AWB=BWA,\quad BWAWB=B\quad{\rm and}\quad A-AWBWA{\rm\ is\ quasinilpotent}.$$
In the case that $X=Y$ and $W=I$, then $A^d=A^{d,W}$ is the
generalized Drazin inverse (or the Koliha-Drazin inverse) of $A$
\cite{K3}. We use $\B(X,Y)^{d,W}$ and $\B(X)^d$, respectively, to
denote the sets of all Wg--Drazin invertible operators in
$\B(X,Y)$ and generalized Drazin invertible operators in $\B(X)$.

The W-weighted Drazin inverse is a particular case of the
Wg--Drazin inverse for which $A-AWBWA$ is nilpotent. The Drazin
inverse is a special case of the generalized Drazin inverse for
which $A-A^2B$ is nilpotent (or equivalently $A^{k+1}B=A^k$, for
some non-negative integer $k$). The smallest such $k$ is called
the index of $A$ and it is denoted by $ind(A)$. In the case that
$ind(A)\leq 1$, $A$ is group invertible and the group inverse
$A^\#$ of $A$ is a special case of a Drazin inverse. The Drazin
inverse is very useful, and its applications in automatics,
probability, statistics, mathematical programming, numerical
analysis, game theory, econometrics, control theory and so on, can
be found in \cite{Ca1,SM-gi}. For more recent results related to
generalized Drazin inverse, W-weighted Drazin inverse and Drazin
inverse see \cite{RMSD,WMS,WYLD,ZMG}.

Prasad and Mohana \cite{PM} introduced the core--EP inverse for a
square matrix of arbitrary index, as a generalization of the core
inverse restricted to a square matrix of index one \cite{BT-core}.
The core--EP inverse was presented for generalized Drazin
invertible operators on Hilbert spaces in \cite{MDjgDMP}.

As a generalization of the core--EP inverse of a square matrix to
a rectangular matrix, the weighted core--EP inverse was given in
\cite{FLT}. In \cite{DM-weighted-core-EP}, the weighted core--EP
inverse was defined for a $W${\it g}-Drazin invertible bounded
linear operator between two Hilbert spaces, extending the concepts
of the weighted core--EP inverse for a rectangular matrix
\cite{FLT}.

Let $W\in\B(Y,X)\backslash\{0\}$ and let $A\in\B(X,Y)$ be $W${\it
g}-Drazin invertible. Then there exists the unique operator $B$
which satisfies conditions
$$WAWB=P_{R((WA)^d)}\quad{\rm and}\quad R(B)\subseteq R((AW)^d)$$ and it is
called the weighted core--EP inverse of $A$, denoted by
$A^{\tiny\textcircled{\tiny d},W}$. If $X=Y$ and $W=I$, then
$A^{\tiny\textcircled{\tiny d}}=A^{\tiny\textcircled{\tiny d},W}$
is the core--EP inverse  of $A$ \cite{MDjgDMP}. In the case that
$A\in\B(X)$ and $ind(A)\leq 1$, the core-EP inverse of $A$ is the
core inverse of $A$, denoted by $A^{\tiny\textcircled{\tiny \#}}$.
Recently, many results concerning the weighted core--EP and
core--EP inverse appeared in papers
\cite{FLT-max,GaoChen,GaoChenPatricio1,DM-core-EP-pre-order,Wang-core-EP,ZCLW}.

In \cite{WC-weak-group}, the weak group inverse was recently
defined for square matrices of an arbitrary index and presented as
a generalization of the group inverse.

We extend the definition of the weak group inverse of a square
matrix to a Wg--Drazin invertible bounded linear operator between
two Hilbert spaces and present a new generalized inverse, named
the weighted weak group inverse. We obtain some properties of the
weighted weak group inverse, in particular, an operator matrix
representation, characterizations and representations of the
weighted weak group inverse. As an application of these results,
we present and characterize the weak group inverse of a
generalized Drazin invertible bounded linear operator on a Hilbert
space. Using the weak group inverse, we define and study several
binary relations.

\section{Weighted weak group inverse}

In order to define the weighted weak group inverse of a Wg--Drazin
invertible bounded linear operator between two Hilbert spaces as
an extension of the weak group inverse of a square matrix, we need
following auxiliary result.

\begin{lemma}{\rm \cite{DM-weighted-core-EP}}\label{te0-repres-A-W} Let $W\in\B(Y,X)\backslash\{0\}$ and $A\in\B(X,Y)^{d,W}$.
Then \begin{equation}A=\sbmatrix{cc}
A_1&A_2\\0&A_3\endsbmatrix:\sbmatrix{c}
R((WA)^d)\\N[((WA)^d)^*]\endsbmatrix\rightarrow\sbmatrix{c}
R((AW)^d)\\N[((AW)^d)^*]\endsbmatrix\label{jed-oper-A}\end{equation}
and
\begin{equation}W=\sbmatrix{cc} W_1&W_2\\0&W_3\endsbmatrix:\sbmatrix{c}
R((AW)^d)\\N[((AW)^d)^*]\endsbmatrix\rightarrow\sbmatrix{c}
R((WA)^d)\\N[((WA)^d)^*]\endsbmatrix,\label{jed-oper-W}\end{equation}
where $A_1\in\B(R((WA)^d),R((AW)^d))^{-1}$,
$W_1\in\B(R((AW)^d),R((WA)^d))^{-1}$,\linebreak
$A_3W_3\in\B(N[((AW)^d)^*])^{qnil}$ and
$W_3A_3\in\B(N[((WA)^d)^*])^{qnil}$.
In addition,
\begin{equation}A^{d,W}=\sbmatrix{cc}
(W_1A_1W_1)^{-1}&W_1^{-1}U\\0&0\endsbmatrix:\sbmatrix{c}
R((WA)^d)\\N[((WA)^d)^*]\endsbmatrix\rightarrow\sbmatrix{c}
R((AW)^d)\\N[((AW)^d)^*]\endsbmatrix,\label{oper-WgD-A}\end{equation}
\begin{equation}A^{\tiny\textcircled{\tiny d},W}=\sbmatrix{cc} (W_1A_1W_1)^{-1}&0\\0&0\endsbmatrix:\sbmatrix{c}
R((WA)^d)\\N[((WA)^d)^*]\endsbmatrix\rightarrow\sbmatrix{c}
R((AW)^d)\\N[((AW)^d)^*]\endsbmatrix\label{jed-oper-Bwcore},\end{equation}
$$(AW)^d=\sbmatrix{cc}
(A_1W_1)^{-1}&T\\0&0\endsbmatrix:\sbmatrix{c}
R((AW)^d)\\N[((AW)^d)^*]\endsbmatrix\rightarrow\sbmatrix{c}
R((AW)^d)\\N[((AW)^d)^*]\endsbmatrix,$$
$$(WA)^d=\sbmatrix{cc}
(W_1A_1)^{-1}&U\\0&0\endsbmatrix:\sbmatrix{c}
R((WA)^d)\\N[((WA)^d)^*]\endsbmatrix\rightarrow\sbmatrix{c}
R((WA)^d)\\N[((WA)^d)^*]\endsbmatrix,$$ where
$$T=\sum\limits_{n=0}^\infty
(A_1W_1)^{-(n+2)}(A_1W_2+A_2W_3)(A_3W_3)^n$$ and
$$U=\sum\limits_{n=0}^\infty
(W_1A_1)^{-(n+2)}(W_1A_2+W_2A_3)(W_3A_3)^n.$$
\end{lemma}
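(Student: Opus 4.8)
The plan is to fix the two orthogonal decompositions $X=R((WA)^d)\oplus N[((WA)^d)^*]$ and $Y=R((AW)^d)\oplus N[((AW)^d)^*]$, to show that $A$ and $W$ are block upper triangular with respect to them, and then to read every generalized inverse off the resulting $2\times2$ operator matrices. First I would record the ingredients. Since $A\in\B(X,Y)^{d,W}$, the operators $WA\in\B(X)$ and $AW\in\B(Y)$ are generalized Drazin invertible, and one has the Cline-type identities $(WA)^d=W((AW)^d)^2A$ and $(AW)^d=A((WA)^d)^2W$; in particular $A(WA)^d=(AW)^dA$ and $W(AW)^d=(WA)^dW$. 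Because $(WA)^d=(WA)^d(WA)(WA)^d$, the subspace $R((WA)^d)$ is the range of the idempotent $(WA)(WA)^d=(WA)^d(WA)$, hence closed, and likewise $R((AW)^d)$ is closed, so the two orthogonal sums are genuine topological direct sums. Writing $x=(WA)^dz\in R((WA)^d)$ gives $Ax=A(WA)^dz=(AW)^d(Az)\in R((AW)^d)$, so $A\,R((WA)^d)\subseteq R((AW)^d)$; symmetrically $W\,R((AW)^d)\subseteq R((WA)^d)$. These two inclusions are precisely the vanishing of the lower-left blocks, i.e.\ the forms (\ref{jed-oper-A}) and (\ref{jed-oper-W}), and multiplying the block matrices shows that $AW$ and $WA$ are upper triangular with diagonal blocks $A_1W_1,A_3W_3$ and $W_1A_1,W_3A_3$ respectively.

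Next I would establish the stated block properties. On the invariant subspace $R((AW)^d)$ the operator $AW$ restricts to an invertible operator whose inverse is the restriction of $(AW)^d$ (since $(AW)(AW)^d=(AW)^d(AW)$ acts as the identity on its own range), so $A_1W_1$ is invertible; similarly $W_1A_1$ is invertible, whence $A_1$ is bounded below and onto, hence invertible, and so is $W_1$. For the quasinilpotency of $A_3W_3$ I would use that $R((AW)^d)$ has two complements in $Y$: the orthogonal one $N[((AW)^d)^*]$ and the ``Drazin'' one $N((AW)^d)$. The orthogonal projection onto $N[((AW)^d)^*]$, whose kernel is $R((AW)^d)$, restricts to a bounded bijection $J$ from $N((AW)^d)$ onto $N[((AW)^d)^*]$, and a direct computation using the $AW$-invariance of $R((AW)^d)$ gives $A_3W_3=J\bigl((AW)|_{N((AW)^d)}\bigr)J^{-1}$. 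Since $(AW)|_{N((AW)^d)}$ coincides with the restriction of the quasinilpotent operator $AW-(AW)^2(AW)^d$, it is quasinilpotent, hence so is $A_3W_3$; replacing $AW$ by $WA$ yields the quasinilpotency of $W_3A_3$.

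Then I would compute the inverses from the block forms. For an upper-triangular $M=\sbmatrix{cc}P&Q\\0&N\endsbmatrix$ with $P$ invertible and $N$ quasinilpotent, the series $S=\sum_{n\ge0}P^{-(n+2)}QN^{n}$ converges (choose $\varepsilon<\|P^{-1}\|^{-1}$ and use $\|N^{n}\|\le C_{\varepsilon}\varepsilon^{n}$), and one checks that $M^d=\sbmatrix{cc}P^{-1}&S\\0&0\endsbmatrix$ by verifying $MM^d=M^dM$, $M^dMM^d=M^d$ and the quasinilpotency of $M-M^2M^d=\sbmatrix{cc}0&Q-P^2S\\0&N\endsbmatrix$. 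Applying this to $AW$ with $(P,Q,N)=(A_1W_1,\ A_1W_2+A_2W_3,\ A_3W_3)$ and to $WA$ with $(P,Q,N)=(W_1A_1,\ W_1A_2+W_2A_3,\ W_3A_3)$ yields the stated expressions for $(AW)^d$, $(WA)^d$, $T$ and $U$. For $A^{d,W}$ I would use the identity $A^{d,W}=A((WA)^d)^2$, verified directly (it satisfies $AWB=BWA=A(WA)^d$, $BWAWB=B$, and $A-AWBWA=A(I-(WA)(WA)^d)$ is quasinilpotent); multiplying out the block matrices and simplifying with $A_1(W_1A_1)^{-1}=W_1^{-1}$ and $A_1(W_1A_1)^{-2}=(W_1A_1W_1)^{-1}$ gives (\ref{oper-WgD-A}). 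Finally, for (\ref{jed-oper-Bwcore}) I would invoke the defining properties of the weighted core--EP inverse: the condition $R(A^{\tiny\textcircled{\tiny d},W})\subseteq R((AW)^d)$ forces the bottom row of its block matrix to vanish, and imposing $WAWB=P_{R((WA)^d)}=\sbmatrix{cc}I&0\\0&0\endsbmatrix$, together with the invertibility of the $(1,1)$-block $W_1A_1W_1$ of $WAW$, forces $B=\sbmatrix{cc}(W_1A_1W_1)^{-1}&0\\0&0\endsbmatrix$.

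The main obstacle is the quasinilpotency of the corner blocks $A_3W_3$ and $W_3A_3$: the decompositions in play are the orthogonal ones, not the canonical Drazin decompositions, so quasinilpotency cannot simply be read off, and the similarity transform through the isomorphism $J$ between the two complements of $R((AW)^d)$ is the key device. The remaining work---the upper-triangular forms, the invertibility of $A_1$ and $W_1$, the convergence of the series $T$ and $U$, and the verification of the Drazin-inverse axioms for the block formula---is routine once that point is in place.
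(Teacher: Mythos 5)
Your proposal is correct, but note that the paper offers no proof of this lemma at all: it is imported verbatim from the cited reference \cite{DM-weighted-core-EP}, so there is nothing in this document to compare against. Your reconstruction follows the route one would expect that reference to take --- upper triangularity of $A$ and $W$ from the intertwining relations $A(WA)^d=(AW)^dA$ and $W(AW)^d=(WA)^dW$, invertibility of $A_1,W_1$ from the invertibility of $A_1W_1$ and $W_1A_1$ on the ranges of the spectral idempotents, and the block formula for the generalized Drazin inverse of an upper triangular operator matrix with invertible and quasinilpotent diagonal blocks --- and you correctly isolate and resolve the one genuinely delicate point, namely that the decomposition used is the orthogonal one rather than the spectral one, so the quasinilpotency of $A_3W_3$ and $W_3A_3$ must be obtained by conjugating the restriction of $AW$ (resp.\ $WA$) to $N((AW)^d)$ (resp.\ $N((WA)^d)$) through the bijection that the orthogonal projection induces between the two complements of $R((AW)^d)$ (resp.\ $R((WA)^d)$). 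The remaining computations for $A^{d,W}=A((WA)^d)^2$ and for the weighted core--EP inverse check out against (\ref{oper-WgD-A}) and (\ref{jed-oper-Bwcore}).
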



We first give algebraic definition of a new generalized inverse.

\begin{theorem}\label{te1-w-weak-group} Let $W\in\B(Y,X)\backslash\{0\}$ and $A\in\B(X,Y)^{d,W}$.
Then the system of equations
\begin{equation}AWBWB=B\quad{ and}\quad AWB=A^{\tiny\textcircled{\tiny d},W}WA \label{def1-w-weak-group}\end{equation} is consistent and it
has the unique solution given by
\begin{equation} B=\sbmatrix{cc}
(W_1A_1W_1)^{-1}&(A_1W_1)^{-2}(A_2+W_1^{-1}W_2A_3)\\0&0\endsbmatrix:\sbmatrix{c}
R((WA)^d)\\N[((WA)^d)^*]\endsbmatrix\rightarrow\sbmatrix{c}
R((AW)^d)\\N[((AW)^d)^*]\endsbmatrix\label{jed-oper-B-w-weak-group},\end{equation}
where $A$ and $W$ are represented as in {\rm (\ref{jed-oper-A})}
and {\rm (\ref{jed-oper-W})}, respectively.
\end{theorem}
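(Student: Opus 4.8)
\medskip\noindent\textbf{Proof strategy.}
The plan is to do everything inside the block decomposition of Lemma~\ref{te0-repres-A-W}, turning each equation in (\ref{def1-w-weak-group}) into an identity of $2\times2$ operator matrices relative to $X=R((WA)^d)\oplus N[((WA)^d)^*]$ and $Y=R((AW)^d)\oplus N[((AW)^d)^*]$. First I would record
\[
WA=\sbmatrix{cc}W_1A_1&W_1A_2+W_2A_3\\0&W_3A_3\endsbmatrix,\qquad
AW=\sbmatrix{cc}A_1W_1&A_1W_2+A_2W_3\\0&A_3W_3\endsbmatrix,
\]
and then multiply $A^{\tiny\textcircled{\tiny d},W}$ from (\ref{jed-oper-Bwcore}) by $WA$; cancelling factors with the invertibility of $A_1$ and $W_1$ (for instance $(W_1A_1W_1)^{-1}W_1A_1=W_1^{-1}$) brings the right-hand side of the second equation to the normal form
\[
A^{\tiny\textcircled{\tiny d},W}WA=\sbmatrix{cc}W_1^{-1}&(A_1W_1)^{-1}(A_2+W_1^{-1}W_2A_3)\\0&0\endsbmatrix .
\]

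For consistency I would take $B$ as in (\ref{jed-oper-B-w-weak-group}) and check the two equations directly. Since the bottom row of $B$ is zero, $AWB$ has zero bottom row, and after the cancellations $A_1W_1(W_1A_1W_1)^{-1}=W_1^{-1}$ and $A_1W_1(A_1W_1)^{-2}=(A_1W_1)^{-1}$ its top row is precisely the matrix just displayed, so $AWB=A^{\tiny\textcircled{\tiny d},W}WA$. Then I would compute $WB$ (again with zero bottom row and top-left block $(W_1A_1)^{-1}$), square it to get $(WB)^2$, and left-multiply by $A$; the remaining cancellations (only $A_1A_1^{-1}=I$ and $W_1W_1^{-1}=I$ on the appropriate summands) return the two blocks of $B$, which gives $AWBWB=B$.

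For uniqueness, let $B'$ be any solution of (\ref{def1-w-weak-group}). Substituting the second equation into the first gives $B'=AWB'WB'=A^{\tiny\textcircled{\tiny d},W}(WAWB')$, hence $R(B')\subseteq R(A^{\tiny\textcircled{\tiny d},W})\subseteq R((AW)^d)$ by the defining property of the weighted core--EP inverse. In the decomposition (\ref{jed-oper-A}) the second summand of $Y$ is $N[((AW)^d)^*]$, so this inclusion says exactly that the bottom row of $B'$ vanishes; write $B'=\sbmatrix{cc}B'_{11}&B'_{12}\\0&0\endsbmatrix$. Matching the blocks of $AWB'$ against the normal form of $A^{\tiny\textcircled{\tiny d},W}WA$ gives $A_1W_1B'_{11}=W_1^{-1}$ and $A_1W_1B'_{12}=(A_1W_1)^{-1}(A_2+W_1^{-1}W_2A_3)$, and the invertibility of $A_1W_1$ forces $B'_{11}=(W_1A_1W_1)^{-1}$ and $B'_{12}=(A_1W_1)^{-2}(A_2+W_1^{-1}W_2A_3)$, i.e. $B'=B$.

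The matrix arithmetic above is routine; the genuinely structural step is the range inclusion $R(B')\subseteq R((AW)^d)$ in the last paragraph, which collapses an arbitrary solution to the correct upper block-triangular shape and is what makes uniqueness work. The main source of care throughout is keeping track of which of $A_1A_1^{-1}$, $A_1^{-1}A_1$, $W_1W_1^{-1}$, $W_1^{-1}W_1$ is being used at each cancellation, since the blocks act between the two distinct subspaces $R((WA)^d)$ and $R((AW)^d)$.
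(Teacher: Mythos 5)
Your proof is correct, and the existence half is essentially the paper's: both verify, inside the block decomposition of Lemma \ref{te0-repres-A-W}, that the displayed $B$ satisfies $AWB=A^{\tiny\textcircled{\tiny d},W}WA$ and $AWBWB=B$ (your normal form for $A^{\tiny\textcircled{\tiny d},W}WA$ agrees with the paper's after the identity $(A_1W_1)^{-1}A_2+(W_1A_1W_1)^{-1}W_2A_3=(A_1W_1)^{-1}(A_2+W_1^{-1}W_2A_3)$). Where you diverge is uniqueness. The paper never returns to the blocks: from $B=(AWB)WB$ it substitutes $AWB=A^{\tiny\textcircled{\tiny d},W}WA$ twice to get $B=A^{\tiny\textcircled{\tiny d},W}W(AWB)=(A^{\tiny\textcircled{\tiny d},W}W)^2A$, so any solution equals a fixed operator and uniqueness is immediate. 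You stop after the first substitution, at $B'=A^{\tiny\textcircled{\tiny d},W}(WAWB')$, extract the range inclusion $R(B')\subseteq R((AW)^d)$ to kill the bottom block row, and then solve for the top row using the invertibility of $A_1W_1$. Your route is sound -- and you correctly flag that the range inclusion is indispensable there, since $A_3W_3$ is only quasinilpotent and $A_3W_3B'_{21}=0$ alone would not force $B'_{21}=0$ -- but it is longer than necessary: one more substitution of $AWB'=A^{\tiny\textcircled{\tiny d},W}WA$ into your expression closes the argument without any block analysis, and it is this coordinate-free form $(A^{\tiny\textcircled{\tiny d},W}W)^2A$ that the paper then adopts as the definition of $A^{\otimes,W}$. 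Your version does have the minor advantage of landing directly on the block formula (\ref{jed-oper-B-w-weak-group}) rather than identifying it with $(A^{\tiny\textcircled{\tiny d},W}W)^2A$ through the existence half.
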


\begin{proof} Using
(\ref{jed-oper-A}), (\ref{jed-oper-W}), (\ref{jed-oper-Bwcore})
and (\ref{jed-oper-B-w-weak-group}), we get $$AWB=\sbmatrix{cc}
W_1^{-1}&(A_1W_1)^{-1}A_2+(W_1A_1W_1)^{-1}W_2A_3\\0&0\endsbmatrix=A^{\tiny\textcircled{\tiny
d},W}WA$$ and $AWBWB=B$, that is, $B$ is a solution of the system
(\ref{def1-w-weak-group}).

If an operator $B$ satisfies (\ref{def1-w-weak-group}), then
$$B=(AWB)WB=A^{\tiny\textcircled{\tiny d}}W(AWB)
=A^{\tiny\textcircled{\tiny d},W}WA^{\tiny\textcircled{\tiny
d},W}WA=(A^{\tiny\textcircled{\tiny d},W}W)^2A$$ and so $B$ is the
unique solution of the system (\ref{def1-w-weak-group}).
\end{proof}

For $X=Y$ and $W=I$ in Theorem \ref{te1-w-weak-group}, we get the
next consequence.

\begin{corollary}\label{cor1-weak-group} Let $A\in\B(X)^d$.
Then the system of equations
\begin{equation}AB^2=B\quad{ and}\quad AB=A^{\tiny\textcircled{\tiny d}}A \label{def1-weak-group}\end{equation} is consistent and it
has the unique solution given by
\begin{equation} B=\sbmatrix{cc}
A_1^{-1}&A_1^{-2}A_2\\0&0\endsbmatrix:\sbmatrix{c}
R(A^d)\\N[(A^d)^*]\endsbmatrix\rightarrow\sbmatrix{c}
R(A^d)\\N[(A^d)^*]\endsbmatrix\label{jed-oper-B-weak-group},\end{equation}
where \begin{equation} A=\sbmatrix{cc}
A_1&A_2\\0&A_3\endsbmatrix:\sbmatrix{c} R(A^d)\\
N((A^d)^*)\endsbmatrix\rightarrow\sbmatrix{c}
R(A^d)\\
N((A^d)^*)\endsbmatrix,\label{jed-oper-A123}\end{equation}
$A_1\in\B(R(A^d))$ is invertible and $A_3\in\B[N((A^d)^*)]$ is
quasinilpotent.
\end{corollary}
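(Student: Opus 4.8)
The plan is to obtain this statement as the special case $X=Y$, $W=I$ of Theorem~\ref{te1-w-weak-group}. First I would recall from the introduction that $\B(X)^d=\B(X,X)^{d,I}$ and $A^d=A^{d,I}$ whenever $A\in\B(X)^d$, so that Theorem~\ref{te1-w-weak-group} is applicable with $W=I$. In this situation $(WA)^d=(AW)^d=A^d$, hence $R((WA)^d)=R((AW)^d)=R(A^d)$ and $N[((WA)^d)^*]=N[((AW)^d)^*]=N[(A^d)^*]$; consequently the two (a priori different) orthogonal decompositions in (\ref{jed-oper-A}) and (\ref{jed-oper-W}) both reduce to the single decomposition $X=R(A^d)\oplus N[(A^d)^*]$ used in (\ref{jed-oper-A123}), and $A$ acquires exactly the block form (\ref{jed-oper-A123}). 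Writing $W=I$ in this decomposition forces $W_1=I$ (the identity on $R(A^d)$), $W_2=0$ and $W_3=I$ (the identity on $N[(A^d)^*]$).

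Next I would substitute $W=I$, $W_1=I$, $W_2=0$, $W_3=I$ into the data of Theorem~\ref{te1-w-weak-group}. The system (\ref{def1-w-weak-group}) becomes $AB^2=B$ together with $AB=A^{\tiny\textcircled{\tiny d},W}WA$; since $W_1A_1W_1=A_1$, the weighted core--EP inverse in (\ref{jed-oper-Bwcore}) collapses to the operator with block form $\big[\begin{smallmatrix}A_1^{-1}&0\\0&0\end{smallmatrix}\big]$, i.e. to $A^{\tiny\textcircled{\tiny d}}$, so the second equation reads $AB=A^{\tiny\textcircled{\tiny d}}A$, which is precisely (\ref{def1-weak-group}). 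Likewise, in the solution (\ref{jed-oper-B-w-weak-group}) one has $(W_1A_1W_1)^{-1}=A_1^{-1}$ and $(A_1W_1)^{-2}(A_2+W_1^{-1}W_2A_3)=A_1^{-2}A_2$, so (\ref{jed-oper-B-w-weak-group}) turns into exactly (\ref{jed-oper-B-weak-group}).

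Consistency and uniqueness are then inherited verbatim from Theorem~\ref{te1-w-weak-group}, and the stated properties of the blocks ($A_1\in\B(R(A^d))$ invertible, $A_3=A_3W_3\in\B[N((A^d)^*)]$ quasinilpotent) come from the corresponding conditions in Lemma~\ref{te0-repres-A-W}. The only point that deserves an explicit line of argument---and essentially the sole place where the reduction could slip---is the identification of the ambient decompositions: one must check that for $W=I$ the subspaces $R((WA)^d)$ and $R((AW)^d)$ really are the \emph{same} subspace $R(A^d)$ (and similarly for the kernels), so that the block matrices of $A$, $W$, $A^{\tiny\textcircled{\tiny d}}$ and $B$ are all written with respect to one and the same orthogonal decomposition of $X$. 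Once this is noted, the corollary is an immediate translation and no further computation is required.
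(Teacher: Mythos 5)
Your proposal is correct and follows exactly the paper's route: the paper obtains this corollary by simply setting $X=Y$ and $W=I$ in Theorem~\ref{te1-w-weak-group}, which is precisely your reduction (you merely spell out the identification $W_1=I$, $W_2=0$, $W_3=I$ and the coincidence of the two decompositions, which the paper leaves implicit). No discrepancy to report.
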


%

\begin{definition} Let $W\in\B(Y,X)\backslash\{0\}$ and $A\in\B(X,Y)^{d,W}$. The weighted weak group inverse of $A$ is defined as
$$A^{\otimes,W}=(A^{\tiny\textcircled{\tiny d},W}W)^2A.$$
\end{definition}

\begin{definition} Let $A\in\B(X)^d$. The weak group inverse of $A$ is defined as
$$A^{\otimes}=(A^{\tiny\textcircled{\tiny d}})^2A.$$
\end{definition}

Remark that, by Lemma \ref{te0-repres-A-W} and Theorem
\ref{te1-w-weak-group}, the weighted weak group inverse is
different from the $W${\it g}-Drazin inverse and weighted core--EP
inverse. Hence, the weighted weak group and weak group inverses
provide new classes of generalized inverses for operators. If $X$
and $Y$ are finite dimensional, then, for
$W\in\B(Y,X)\backslash\{0\}$, every operator $A\in\B(X,Y)$ has the
weighted weak group inverse. For $A\in\B(X)$ and $ind(A)\leq 1$,
the weak group inverse of $A$ is the group inverse of $A$.

We also have a definition of the weighted weak group inverse from
a geometrical point of view.

\begin{theorem} Let $W\in\B(Y,X)\backslash\{0\}$ and $A\in\B(X,Y)^{d,W}$. The system of conditions
\begin{equation}WAWB=P_{R(WA^{d,W}), N(A^{\tiny\textcircled{\tiny d},W}WA)}\qquad{and}\qquad
R(B)\subseteq R(A^{d,W}) \label{def2-w-weak-group}\end{equation}
is consistent and it has the unique solution
$B=(A^{\tiny\textcircled{\tiny d},W}W)^2A$.
\end{theorem}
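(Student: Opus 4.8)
The strategy is to use the block matrix representations from Lemma \ref{te0-repres-A-W} together with the explicit form of $B=(A^{\tiny\textcircled{\tiny d},W}W)^2A$ obtained in Theorem \ref{te1-w-weak-group} (equation (\ref{jed-oper-B-w-weak-group})), and verify the two geometric conditions by direct computation in the decomposition $X=R((WA)^d)\oplus N[((WA)^d)^*]$ and $Y=R((AW)^d)\oplus N[((AW)^d)^*]$. First I would record $B$ in its block form, and compute the product $WAWB$. Since $W$, $A$, $W$ are all upper triangular in the appropriate bases and the $(2,2)$ block of $B$ vanishes, the product $WAWB$ is again upper triangular with zero second row, and its $(1,1)$ block collapses (using $W_1A_1W_1\cdot(W_1A_1W_1)^{-1}=I$) to the identity on $R((WA)^d)$, while the $(1,2)$ block is some operator $Z\colon N[((WA)^d)^*]\to R((WA)^d)$. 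So $WAWB$ is the idempotent $\bigl[\begin{smallmatrix}I&Z\\0&0\end{smallmatrix}\bigr]$; this is precisely $P_{L,M}$ where $L=R((WA)^d)$ and $M=N(WAWB)$. The task is then to identify $N(WAWB)$ with $N(A^{\tiny\textcircled{\tiny d},W}WA)$, i.e. to show these two idempotents coincide.

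To see that, I would note that $WAWB=W(AWB)=W\bigl(A^{\tiny\textcircled{\tiny d},W}WA\bigr)$, using the defining relation $AWB=A^{\tiny\textcircled{\tiny d},W}WA$ from Theorem \ref{te1-w-weak-group}. Hence $N(WAWB)\supseteq N(A^{\tiny\textcircled{\tiny d},W}WA)$ trivially, and for the reverse inclusion it suffices to observe that $W$ is injective on $R(A^{\tiny\textcircled{\tiny d},W}WA)=R(AWB)$; concretely $R(AWB)\subseteq R((AW)^d)$ (since $R(B)\subseteq R((AW)^d)$ and the first column of $A$, $W$, $B$ keeps us there) and $W_1$ is invertible on that subspace by Lemma \ref{te0-repres-A-W}, so $W$ restricted there has trivial kernel. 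Thus $N(WAWB)=N(A^{\tiny\textcircled{\tiny d},W}WA)$, and combined with $R(WAWB)=R((WA)^d)=R(WA^{d,W})$ (the latter equality following from (\ref{oper-WgD-A}): the first column of $W A^{d,W}$ spans exactly $R((WA)^d)$ because $W_1A_1W_1$ is invertible), we conclude $WAWB=P_{R(WA^{d,W}),\,N(A^{\tiny\textcircled{\tiny d},W}WA)}$, the first required identity. The range condition $R(B)\subseteq R(A^{d,W})$ is immediate from the block forms (\ref{oper-WgD-A}) and (\ref{jed-oper-B-w-weak-group}): both have zero second row, so both have range inside $R((AW)^d)$, and more precisely the first column of $B$ factors through that of $A^{d,W}$.

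Finally, for uniqueness, suppose $B'$ also satisfies (\ref{def2-w-weak-group}). From $R(B')\subseteq R(A^{d,W})$ and the idempotency of $P_{R(WA^{d,W}),N(A^{\tiny\textcircled{\tiny d},W}WA)}$ one recovers $WAWB'=WAWB$, and then injectivity of $W$ on the relevant range (the same argument as above, since $R(WAWB')\subseteq R(A^{\tiny\textcircled{\tiny d},W}W)$-type considerations pin down where we live) upgrades this to $AWB'=AWB=A^{\tiny\textcircled{\tiny d},W}WA$; an application of Theorem \ref{te1-w-weak-group}, or simply re-running the short computation $B'=(AWB')WB'\cdots$ as in its proof, forces $B'=(A^{\tiny\textcircled{\tiny d},W}W)^2A=B$. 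The main obstacle I anticipate is bookkeeping: carefully tracking which invertibility ($W_1$ vs. $A_1$ vs. $W_1A_1W_1$) is being invoked at each step to justify cancelling $W$ on the left, and making sure the "injective on a range" arguments are stated over the correct subspace; the algebra itself is routine once the block structure is written out.
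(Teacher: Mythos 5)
Your existence argument is sound and is essentially the paper's: both rest on the identity $WAWB=W(AWB)=WA^{\tiny\textcircled{\tiny d},W}WA$ and on identifying the range and null space of this idempotent. The paper does this directly from the operator identities, while you route it through the block forms of Lemma \ref{te0-repres-A-W}; that costs more bookkeeping but verifies the same three facts, namely $R(WAWB)=R(WA^{d,W})$, $N(WAWB)=N(A^{\tiny\textcircled{\tiny d},W}WA)$ and $R(B)\subseteq R(A^{d,W})$.

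The gap is in how you close the uniqueness step. Having reached $AWB'=A^{\tiny\textcircled{\tiny d},W}WA$, you propose to finish by ``an application of Theorem \ref{te1-w-weak-group}'' or by ``re-running the computation $B'=(AWB')WB'$''. Neither works as stated: the uniqueness in Theorem \ref{te1-w-weak-group} --- and the identity $B'=(AWB')WB'$ is precisely its first line --- requires the \emph{algebraic} hypothesis $AWB'WB'=B'$, which you have not established for an arbitrary solution of the geometric system (\ref{def2-w-weak-group}); beyond $AWB'=A^{\tiny\textcircled{\tiny d},W}WA$ all you know about $B'$ is the range condition. The fix is short and must invoke that range condition: since $A^{\tiny\textcircled{\tiny d},W}WAW$ is an idempotent with range $R(A^{d,W})$, the inclusion $R(B')\subseteq R(A^{d,W})$ gives $B'=A^{\tiny\textcircled{\tiny d},W}WAWB'=A^{\tiny\textcircled{\tiny d},W}W(AWB')=(A^{\tiny\textcircled{\tiny d},W}W)^2A$. (The paper sidesteps the issue entirely: from $WAW(B_1-B_2)=0$ it gets $R(B_1-B_2)\subseteq N(WAW)\subseteq N(A^{d,W}WAW)$, while the range conditions give $R(B_1-B_2)\subseteq R(A^{d,W})=R(A^{d,W}WAW)$, and the range and null space of the idempotent $A^{d,W}WAW$ meet only in $\{0\}$.) Relatedly, your intermediate cancellation $WAWB'=WAWB\Rightarrow AWB'=AWB$ is justified only because $R(AWB')\subseteq R((AW)^d)$, where $W$ acts as the invertible $W_1$; this again comes from $R(B')\subseteq R(A^{d,W})=R((AW)^d)$ and should be said explicitly rather than via the vague ``$R(WAWB')\subseteq R(A^{\tiny\textcircled{\tiny d},W}W)$-type considerations''.
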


\begin{proof} For $B=(A^{\tiny\textcircled{\tiny d},W}W)^2A$, we have
that $WAWB=WA^{\tiny\textcircled{\tiny d},W}WA$ is a projector
onto $R(WA^{\tiny\textcircled{\tiny d},W})=R(WA^{d,W})$ along
$N(WA^{\tiny\textcircled{\tiny
d},W}WA)=N(A^{\tiny\textcircled{\tiny d},W}WA)$ and $R(B)\subseteq
R(A^{\tiny\textcircled{\tiny d},W})=R(A^{d,W})$, i.e. $B$
satisfies conditions (\ref{def2-w-weak-group}).

Assume that two operators $B_1$ and $B_2$ satisfy conditions
(\ref{def2-w-weak-group}). Firstly, $WAW(B_1-B_2)=P_{R(WA^{d,W}),
N(A^{\tiny\textcircled{\tiny d},W}WA)}-P_{R(WA^{d,W}),
N(A^{\tiny\textcircled{\tiny d},W}WA)}=0$ implies
$R(B_1-B_2)\subseteq N(WAW)\subseteq N(A^{d,W}WAW)$. Further,
$R(B_1)\subseteq R(A^{d,W})=R(A^{d,W}WAW)$ and $R(B_2)\subseteq
R(A^{d,W}WAW)$ give $R(B_1-B_2)\subseteq R(A^{d,W}WAW)\cap
N(A^{d,W}WAW)=\{0\}$. Therefore, $B_1=B_2$ and only $B$ satisfies
(\ref{def2-w-weak-group}).
\end{proof}

Consequently, the geometrical approach is given now for the weak
group inverse.

\begin{corollary} Let $A\in\B(X)^d$. The system of conditions
\begin{equation}AB=P_{R(A^d),N(A^{\tiny\textcircled{\tiny d}}A)}\qquad{and}\qquad
R(B)\subseteq R(A^d), \label{def2-weak-group}\end{equation} is
consistent and it has the unique solution
$B=(A^{\tiny\textcircled{\tiny d}})^2A$.
\end{corollary}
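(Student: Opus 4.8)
The plan is to derive the corollary as the special case $X=Y$, $W=I$ of the preceding theorem, since with $W=I$ the $Wg$-Drazin inverse $A^{d,W}$ collapses to the generalized Drazin inverse $A^d$ and the weighted core--EP inverse $A^{\tiny\textcircled{\tiny d},W}$ collapses to the core--EP inverse $A^{\tiny\textcircled{\tiny d}}$. Under this specialization, the first condition $WAWB=P_{R(WA^{d,W}),N(A^{\tiny\textcircled{\tiny d},W}WA)}$ becomes $AB=P_{R(A^d),N(A^{\tiny\textcircled{\tiny d}}A)}$, and $R(B)\subseteq R(A^{d,W})$ becomes $R(B)\subseteq R(A^d)$, which are exactly the conditions in (\ref{def2-weak-group}). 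So the statement to prove is literally the $W=I$ instance, and I would simply invoke the theorem just proved.

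For a self-contained argument not relying on that reduction, I would mimic the proof of the theorem directly. First, for existence, set $B=(A^{\tiny\textcircled{\tiny d}})^2A$. Then $AB=A^{\tiny\textcircled{\tiny d}}(A^{\tiny\textcircled{\tiny d}}A)$, and using that $A^{\tiny\textcircled{\tiny d}}$ is the core--EP inverse one checks $AA^{\tiny\textcircled{\tiny d}}$ is the orthogonal projector onto $R(A^{\tiny\textcircled{\tiny d}})=R(A^d)$; hence $AB=AA^{\tiny\textcircled{\tiny d}}\cdot A^{\tiny\textcircled{\tiny d}}A$ is an idempotent with range $R(A^{\tiny\textcircled{\tiny d}})=R(A^d)$ and null space $N(A^{\tiny\textcircled{\tiny d}}A)$, i.e. $AB=P_{R(A^d),N(A^{\tiny\textcircled{\tiny d}}A)}$. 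Also $R(B)\subseteq R(A^{\tiny\textcircled{\tiny d}})=R(A^d)$, so $B$ satisfies (\ref{def2-weak-group}).

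For uniqueness, suppose $B_1,B_2$ both satisfy (\ref{def2-weak-group}). Then $A(B_1-B_2)=P_{R(A^d),N(A^{\tiny\textcircled{\tiny d}}A)}-P_{R(A^d),N(A^{\tiny\textcircled{\tiny d}}A)}=0$, so $R(B_1-B_2)\subseteq N(A)\subseteq N(A^d A)$. On the other hand $R(B_i)\subseteq R(A^d)=R(A^d A)$ for $i=1,2$, whence $R(B_1-B_2)\subseteq R(A^d A)\cap N(A^d A)=\{0\}$. Therefore $B_1=B_2$, and the unique solution is $B=(A^{\tiny\textcircled{\tiny d}})^2A=A^{\otimes}$. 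The only point demanding a little care is the identification of the range and kernel of the idempotent $AB$, namely that $R(A^{\tiny\textcircled{\tiny d}})=R(A^d)$ and that $R(A^d A)=R(A^d)$ with $R(A^d A)\cap N(A^d A)=\{0\}$; these follow from the basic properties of the generalized Drazin and core--EP inverses recalled in the introduction, so no real obstacle arises.
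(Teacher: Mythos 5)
Your proposal is correct and matches the paper's treatment: the corollary is stated there without a separate proof, precisely as the $X=Y$, $W=I$ specialization of the preceding theorem, and your self-contained argument is just that theorem's proof (existence via the projector $A^{\tiny\textcircled{\tiny d}}A$, uniqueness via $R(A^dA)\cap N(A^dA)=\{0\}$) written out with $W=I$. The only blemish is the intermediate line $AB=A^{\tiny\textcircled{\tiny d}}(A^{\tiny\textcircled{\tiny d}}A)$, which is a slip for $AB=A^{\tiny\textcircled{\tiny d}}A$; the computation $AB=AA^{\tiny\textcircled{\tiny d}}\cdot A^{\tiny\textcircled{\tiny d}}A$ that follows it is the correct one.
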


%

We consider some idempotents determined by the weighted weak group
inverse and observe that the weighted weak group inverse is an
outer inverse.

\begin{lemma}\label{le1-w-weak-group} Let $W\in\B(Y,X)\backslash\{0\}$ and $A\in\B(X,Y)^{d,W}$. Then:
\begin{itemize}
\item [\rm (i)] $AWA^{\otimes,W}W$ is a projector onto
$R(A^{d,W})$ along $N(A^{\tiny\textcircled{\tiny d},W}WAW)$;

\item [\rm (ii)] $WAWA^{\otimes,W}$ is a projector onto
$R(WA^{d,W})$ along $N(A^{\tiny\textcircled{\tiny d},W}WA)$;

\item [\rm (iii)] $A^{\otimes,W}WAW$ is a projector onto
$R(A^{d,W})$ along $N(A^{\tiny\textcircled{\tiny d},W}W(AW)^2)$;

\item [\rm (iv)] $WA^{\otimes,W}WA$ is a projector onto
$R(WA^{d,W})$ along $N(A^{\tiny\textcircled{\tiny d},W}(WA)^2)$;

\item [\rm (v)]
$A^{\otimes,W}=(WAW)^{(2)}_{R(A^{d,W}),N(A^{\tiny\textcircled{\tiny
d},W}WA)}$.
\end{itemize}
\end{lemma}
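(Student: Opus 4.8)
The plan is to work throughout in the operator matrix representation provided by Lemma~\ref{te0-repres-A-W} and Theorem~\ref{te1-w-weak-group}, since then every operator appearing in the five claims becomes an explicit $2\times 2$ upper block-triangular matrix with an invertible $(1,1)$ block and zero bottom row. First I would record the matrix forms of all relevant operators relative to the decompositions $X\to\B(R((WA)^d),\ldots)$ etc.: from (\ref{jed-oper-A}) and (\ref{jed-oper-W}) we have $A$ and $W$; from (\ref{jed-oper-Bwcore}) we have $A^{\tiny\textcircled{\tiny d},W}$; and since $A^{\otimes,W}=(A^{\tiny\textcircled{\tiny d},W}W)^2A$ is exactly the $B$ from (\ref{jed-oper-B-w-weak-group}), we may take $A^{\otimes,W}=\sbmatrix{cc}(W_1A_1W_1)^{-1}&(A_1W_1)^{-2}(A_2+W_1^{-1}W_2A_3)\\0&0\endsbmatrix$. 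With these in hand, parts (i)--(iv) reduce to multiplying out two or three block-triangular matrices: each product will have the shape $\sbmatrix{cc}I&*\\0&0\endsbmatrix$ (with respect to the appropriate decomposition), which is manifestly an idempotent, and I read off its range as the first summand and its kernel from the explicit $(1,2)$ block together with the vanishing $(2,2)$ block.

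For the range and kernel identifications I would use two standard facts: a block operator $\sbmatrix{cc}I&C\\0&0\endsbmatrix:\sbmatrix{c}L\\M\endsbmatrix\to\sbmatrix{c}L\\M\endsbmatrix$ is the idempotent onto $L$ along $N$, where $N=\{(-Cm,m):m\in M\}$ is the graph subspace determined by $C$; and this graph subspace is precisely the null space of the operator whose matrix is $\sbmatrix{cc}I&C\\0&0\endsbmatrix$ (or any operator with the same first row up to left multiplication by something invertible). Then for part (i) I must check $N(A^{\tiny\textcircled{\tiny d},W}WAW)$ equals that graph subspace: compute $A^{\tiny\textcircled{\tiny d},W}WAW$ as a block matrix and observe it has the form $\sbmatrix{cc}I&C\\0&0\endsbmatrix$ with the \emph{same} $C$ (up to an invertible left factor) as appears in $AWA^{\otimes,W}W$; similarly match $R(A^{d,W})$ with the first summand $R((AW)^d)$ using (\ref{oper-WgD-A}), which shows $R(A^{d,W})=R((AW)^d)$ since its $(1,1)$ block $(W_1A_1W_1)^{-1}$ is invertible. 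Parts (ii), (iii), (iv) proceed identically with $WAWA^{\otimes,W}$, $A^{\otimes,W}WAW$ and $WA^{\otimes,W}WA$ respectively; the only care needed is tracking which of the four decompositions one is working in (the $R((WA)^d)$-side versus the $R((AW)^d)$-side), and noting $R(WA^{d,W})=R((WA)^d)$ by the same invertibility argument applied to $WA^{d,W}$.

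For part (v) I would verify the three defining properties of $(WAW)^{(2)}_{T,S}$: that $A^{\otimes,W}$ is an outer inverse of $WAW$, i.e. $A^{\otimes,W}(WAW)A^{\otimes,W}=A^{\otimes,W}$, that $R(A^{\otimes,W})=R(A^{d,W})$, and that $N(A^{\otimes,W})=N(A^{\tiny\textcircled{\tiny d},W}WA)$. The outer-inverse identity follows immediately from part (iii): $A^{\otimes,W}WAW$ is the idempotent onto $R(A^{d,W})$, and since $R(A^{\otimes,W})\subseteq R(A^{d,W})$ (visible from the zero bottom row of its matrix, giving $R(A^{\otimes,W})=R((AW)^d)=R(A^{d,W})$), left-multiplying $A^{\otimes,W}$ by this idempotent returns $A^{\otimes,W}$. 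The range equality was already established in the course of (i)/(iii), and $N(A^{\otimes,W})=N(A^{\tiny\textcircled{\tiny d},W}WA)$ follows from part (ii): $WAWA^{\otimes,W}$ has kernel $N(A^{\tiny\textcircled{\tiny d},W}WA)$, and because its $(1,1)$ block is invertible, $N(A^{\otimes,W})=N(WAWA^{\otimes,W})$. I expect the main obstacle to be purely bookkeeping rather than conceptual: keeping the four Hilbert-space decompositions straight and confirming that the $(1,2)$ blocks of the various products genuinely coincide (up to invertible left factors) so that the claimed kernels match on the nose; this amounts to a few lines of block multiplication using the explicit formulas for $T$, $U$ from Lemma~\ref{te0-repres-A-W}, but must be done carefully since a sign or an ordering error there would give the wrong kernel subspace.
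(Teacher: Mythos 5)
Your proposal is correct, but it takes a more uniformly computational route than the paper. The paper handles (i)--(iv) essentially in one line each by invoking the algebraic identity $AWA^{\otimes,W}=A^{\tiny\textcircled{\tiny d},W}WA$ from Theorem \ref{te1-w-weak-group} (so that, e.g., $AWA^{\otimes,W}W=A^{\tiny\textcircled{\tiny d},W}WAW$) and then citing the known projector properties of the weighted core--EP inverse; for (v) it verifies $A^{\otimes,W}WAWA^{\otimes,W}=A^{\otimes,W}$ algebraically and identifies the range and null space by two-sided inclusions extracted from the factorizations $A^{d,W}=(A^{\tiny\textcircled{\tiny d},W}W)^2(AW)^2A^{d,W}$ and $A^{\tiny\textcircled{\tiny d},W}WA=AWA^{\otimes,W}$, appealing to the block forms only once, to check $A(WA^{\tiny\textcircled{\tiny d},W})^2=A^{\tiny\textcircled{\tiny d},W}$. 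You instead compute every operator as an explicit $2\times 2$ block matrix from Lemma \ref{te0-repres-A-W} and read off idempotency, ranges and kernels from the shape $\sbmatrix{cc}I&C\\0&0\endsbmatrix$ and its graph-subspace kernel. This is sound: the auxiliary facts you rely on (the first row determines the kernel up to an invertible left factor; $R(A^{d,W})=R((AW)^d)$ and $R(WA^{d,W})=R((WA)^d)$ because the relevant $(1,1)$ blocks are invertible; $WAW$ is injective on $R(A^{\otimes,W})$, so $N(WAWA^{\otimes,W})=N(A^{\otimes,W})$) are all correct, and the $(1,2)$ blocks do match as you anticipate. What your route buys is self-containedness --- you never need the projector properties of $A^{\tiny\textcircled{\tiny d},W}$ imported from the weighted core--EP paper --- at the price of substantially more block bookkeeping; the paper's identity-based argument is shorter and produces the kernels directly in the stated forms such as $N(A^{\tiny\textcircled{\tiny d},W}WA)$ without matching $(1,2)$ blocks by hand.
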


\begin{proof} (i) Notice that $AWA^{\otimes,W}W=A^{\tiny\textcircled{\tiny d},W}WAW$ is a projector onto
$R(A^{\tiny\textcircled{\tiny d},W})=R(A^{d,W})$ along
$N(A^{\tiny\textcircled{\tiny d},W}WAW)$.

(v) Since $A^{\otimes,W}=(A^{\tiny\textcircled{\tiny d},W}W)^2A$
and $AWA^{\otimes,W}=A^{\tiny\textcircled{\tiny d},W}WA$, we
obtain
$$A^{\otimes,W}WAWA^{\otimes,W}=(A^{\tiny\textcircled{\tiny
d},W}W)^2AWA^{\tiny\textcircled{\tiny
d},W}WA=(A^{\tiny\textcircled{\tiny d},W}W)^2A=A^{\otimes,W}.$$
Hence, $A^{\otimes,W}$ is an outer inverse of $WAW$ with
$R(A^{\otimes,W})=R((A^{\tiny\textcircled{\tiny d},W}W)^2A)$ and
$N(A^{\otimes,W})=N((A^{\tiny\textcircled{\tiny d},W}W)^2A)$. From
$$A^{d,W}=A^{\tiny\textcircled{\tiny d},W}WA^{d,W}WA
=A^{\tiny\textcircled{\tiny d},W}WA^{\tiny\textcircled{\tiny
d},W}WA^{d,W}WAWA=(A^{\tiny\textcircled{\tiny
d},W}W)^2(AW)^2A^{d,W},$$ we get $R(A^{d,W})\subseteq
R((A^{\tiny\textcircled{\tiny d},W}W)^2A)\subseteq R(A^{d,W})$
which yields $R(A^{\otimes,W})=R(A^{d,W})$. By (\ref{jed-oper-A}),
(\ref{jed-oper-W}) and (\ref{jed-oper-Bwcore}), we show that
$A(WA^{\tiny\textcircled{\tiny d},W})^2=A^{\tiny\textcircled{\tiny
d},W}$. Now, we have that
$$A^{\tiny\textcircled{\tiny
d},W}WA=AW(A^{\tiny\textcircled{\tiny
d},W}WA^{\tiny\textcircled{\tiny d},W}WA)=AWA^{\otimes,W}$$ which
gives $N(A^{\otimes,W})\subseteq N(A^{\tiny\textcircled{\tiny
d},W}WA)\subseteq N(A^{\otimes,W})$, i.e.
$N(A^{\otimes,W})=N(A^{\tiny\textcircled{\tiny d},W}WA)$. Thus,
$A^{\otimes,W}=(WAW)^{(2)}_{R(A^{d,W}),N(A^{\tiny\textcircled{\tiny
d},W}WA)}$.

Similarly, we verify parts (ii)--(iv).
\end{proof}

By Lemma \ref{le1-w-weak-group}, notice that the weak group
inverse of $A$ is an outer inverse of $A$.

\begin{corollary} Let $A\in\B(X)^d$. Then:
\begin{itemize}
\item [\rm (i)] $AA^{\otimes}$ is a projector onto $R(A^{d})$
along $N(A^{\tiny\textcircled{\tiny d}}A)$;

\item [\rm (ii)] $A^{\otimes}A$ is a projector onto $R(A^{d})$
along $N(A^{\tiny\textcircled{\tiny d}}A^2)$;

\item [\rm (iii)]
$A^{\otimes}=A^{(2)}_{R(A^d),N(A^{\tiny\textcircled{\tiny d}}A)}$.
\end{itemize}
\end{corollary}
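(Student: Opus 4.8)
The plan is to derive this corollary directly from Lemma~\ref{le1-w-weak-group} by specializing $X=Y$ and $W=I$, together with the defining relation $A^{\otimes}=(A^{\tiny\textcircled{\tiny d}})^2A$. First I would observe that when $W=I$, the $W${\it g}-Drazin inverse reduces to the generalized Drazin inverse, so $A\in\B(X)^d$ is exactly the hypothesis $A\in\B(X,Y)^{d,W}$ of the lemma, and $A^{d,W}=A^d$, $A^{\tiny\textcircled{\tiny d},W}=A^{\tiny\textcircled{\tiny d}}$, $A^{\otimes,W}=A^{\otimes}$. With this dictionary in hand, each item of the corollary is a transcription of the corresponding item of Lemma~\ref{le1-w-weak-group}: part~(i) here is part~(ii) of the lemma with $WAWA^{\otimes,W}$ collapsing to $AA^{\otimes}$ and $N(A^{\tiny\textcircled{\tiny d},W}WA)$ collapsing to $N(A^{\tiny\textcircled{\tiny d}}A)$; part~(ii) here is part~(iv) of the lemma, where $WA^{\otimes,W}WA$ becomes $A^{\otimes}A$ and $N(A^{\tiny\textcircled{\tiny d},W}(WA)^2)$ becomes $N(A^{\tiny\textcircled{\tiny d}}A^2)$; and part~(iii) here is part~(v) of the lemma, where $(WAW)^{(2)}_{R(A^{d,W}),N(A^{\tiny\textcircled{\tiny d},W}WA)}$ becomes $A^{(2)}_{R(A^d),N(A^{\tiny\textcircled{\tiny d}}A)}$.

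After stating the reduction, the only genuine content to verify is that the three items of the lemma that survive are consistent with one another and that no extra hypothesis is smuggled in by the $W=I$ substitution; in particular one should check that items (i) and (iii) of the lemma collapse onto items (i) and (ii) of the corollary (so that no information is lost and nothing is double-counted), since with $W=I$ the four projectors of the lemma pair up: $AWA^{\otimes,W}W$ and $WAWA^{\otimes,W}$ both become $AA^{\otimes}$, while $A^{\otimes,W}WAW$ and $WA^{\otimes,W}WA$ both become $A^{\otimes}A$. A short computation confirms $N(A^{\tiny\textcircled{\tiny d}}AW)=N(A^{\tiny\textcircled{\tiny d}}A)$ when $W=I$, and likewise $N(A^{\tiny\textcircled{\tiny d}}W(AW)^2)=N(A^{\tiny\textcircled{\tiny d}}A^2)$, so the merged statements are exactly (i) and (ii).

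I expect the main obstacle to be essentially notational rather than mathematical: one must be careful that the identification $A^{\tiny\textcircled{\tiny d},W}=A^{\tiny\textcircled{\tiny d}}$ at $W=I$ is the one used in the excerpt (it is, by the discussion following the definition of the weighted core--EP inverse) and that the geometric description of the idempotents in Lemma~\ref{le1-w-weak-group} does not rely on invertibility of $W$ anywhere in its proof beyond what the Wg--Drazin hypothesis already guarantees. Since the lemma's proof uses only the decompositions (\ref{jed-oper-A})--(\ref{jed-oper-Bwcore}), which specialize cleanly via Corollary~\ref{cor1-weak-group}, the corollary follows. The written proof can therefore be a single sentence: the statement is the case $X=Y$, $W=I$ of Lemma~\ref{le1-w-weak-group}.

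\begin{proof} Apply Lemma~\ref{le1-w-weak-group} with $X=Y$ and $W=I$. Under this specialization $A\in\B(X)^d$, $A^{\tiny\textcircled{\tiny d},W}=A^{\tiny\textcircled{\tiny d}}$, $A^{d,W}=A^d$ and $A^{\otimes,W}=A^{\otimes}$; parts (ii) and (iv) of Lemma~\ref{le1-w-weak-group} become parts (i) and (ii) of this corollary, and part (v) becomes part (iii). (Parts (i) and (iii) of the lemma coincide with parts (ii) and (iv) when $W=I$, since then $N(A^{\tiny\textcircled{\tiny d}}AW)=N(A^{\tiny\textcircled{\tiny d}}A)$ and $N(A^{\tiny\textcircled{\tiny d}}W(AW)^2)=N(A^{\tiny\textcircled{\tiny d}}A^2)$.) \end{proof}
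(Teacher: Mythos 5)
Your proof is correct and matches the paper's approach: the paper states this corollary as an immediate consequence of Lemma~\ref{le1-w-weak-group} (with no separate argument), and your specialization $X=Y$, $W=I$, including the pairwise collapse of the lemma's four projectors into two, is exactly the intended reduction.
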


Several characterizations of the weighted weak group inverse are
presented now.

\begin{theorem}\label{te3-w-weak-group} Let $W\in\B(Y,X)\backslash\{0\}$ and $A\in\B(X,Y)^{d,W}$.
Then, for $B\in\B(X,Y)$, the following statements are equivalent:
\begin{itemize}
\item[\rm(i)] $B$ is the weighted weak group inverse of $A$;

\item[\rm(ii)] $B$ satisfies
$$A^{\tiny\textcircled{\tiny d},W}WAWB=B\quad and \quad AWB=A^{\tiny\textcircled{\tiny d},W}WA;$$

\item[\rm(iii)] $B$ satisfies
$$BWAWB=B,\quad AWB=A^{\tiny\textcircled{\tiny d},W}WA\quad and\quad BWA^{\tiny\textcircled{\tiny d},W}=
A^{\tiny\textcircled{\tiny d},W}WA^{\tiny\textcircled{\tiny
d},W};$$

\item[\rm(iv)] $B$ satisfies
$$BWAWB=B,\quad AWB=A^{\tiny\textcircled{\tiny d},W}WA\quad and\quad BWA^{d,W}=
A^{d,W}WA^{d,W}.$$

%
\end{itemize}
\end{theorem}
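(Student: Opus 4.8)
The plan is to prove Theorem~\ref{te3-w-weak-group} by establishing a cycle of implications, using the algebraic characterization from Theorem~\ref{te1-w-weak-group} and the basic identities for the weighted core--EP inverse and Wg--Drazin inverse recorded in Lemma~\ref{te0-repres-A-W}. Throughout I will freely use the facts already proved in Lemma~\ref{le1-w-weak-group}: that $A^{\otimes,W}=(A^{\tiny\textcircled{\tiny d},W}W)^2A$, that $AWA^{\otimes,W}=A^{\tiny\textcircled{\tiny d},W}WA$, that $AWA^{\tiny\textcircled{\tiny d},W}WA = A^{\tiny\textcircled{\tiny d},W}WA$ (so that $A^{\tiny\textcircled{\tiny d},W}$ is idempotent-like under $WAW$), and the range/kernel descriptions $R(A^{\otimes,W})=R(A^{d,W})$, $N(A^{\otimes,W})=N(A^{\tiny\textcircled{\tiny d},W}WA)$.

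First I would prove (i)$\Rightarrow$(ii). If $B=A^{\otimes,W}=(A^{\tiny\textcircled{\tiny d},W}W)^2A$, then $AWB=A^{\tiny\textcircled{\tiny d},W}WA$ is immediate from Lemma~\ref{le1-w-weak-group}(i)--(v) (this is exactly the content of $AWA^{\otimes,W}=A^{\tiny\textcircled{\tiny d},W}WA$), and then $A^{\tiny\textcircled{\tiny d},W}WAWB = A^{\tiny\textcircled{\tiny d},W}W(A^{\tiny\textcircled{\tiny d},W}WA) = (A^{\tiny\textcircled{\tiny d},W}W)^2A = B$. Next, (ii)$\Rightarrow$(i): from $AWB=A^{\tiny\textcircled{\tiny d},W}WA$ substitute into the first equation of (ii) to get $B = A^{\tiny\textcircled{\tiny d},W}W(AWB) = A^{\tiny\textcircled{\tiny d},W}W A^{\tiny\textcircled{\tiny d},W}WA = (A^{\tiny\textcircled{\tiny d},W}W)^2A = A^{\otimes,W}$, so the two-equation system of (ii) already pins down $B$ uniquely. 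This shows (i)$\Leftrightarrow$(ii), and in fact shows that one may reach $B=A^{\otimes,W}$ from any of the later characterizations as soon as one derives the two equations $BWAWB=B$ and $AWB=A^{\tiny\textcircled{\tiny d},W}WA$ — these are precisely system~(\ref{def1-w-weak-group}), whose unique solution is $A^{\otimes,W}$ by Theorem~\ref{te1-w-weak-group}.

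For the remaining equivalences I would argue (i)$\Rightarrow$(iii)$\Rightarrow$(i) and (i)$\Rightarrow$(iv)$\Rightarrow$(i). The implications from (i) are direct verifications: with $B=A^{\otimes,W}=(A^{\tiny\textcircled{\tiny d},W}W)^2A$ one checks $BWAWB=B$ (this is Lemma~\ref{le1-w-weak-group}(v), $A^{\otimes,W}$ being an outer inverse of $WAW$), $AWB=A^{\tiny\textcircled{\tiny d},W}WA$ as above, and the third identity by computing $BWA^{\tiny\textcircled{\tiny d},W} = (A^{\tiny\textcircled{\tiny d},W}W)^2AWA^{\tiny\textcircled{\tiny d},W} = A^{\tiny\textcircled{\tiny d},W}W(A^{\tiny\textcircled{\tiny d},W}WAWA^{\tiny\textcircled{\tiny d},W})$; using $AWA^{\tiny\textcircled{\tiny d},W}WA=A^{\tiny\textcircled{\tiny d},W}WA$ together with $A^{\tiny\textcircled{\tiny d},W} = A(WA^{\tiny\textcircled{\tiny d},W})^2$ (established inside the proof of Lemma~\ref{le1-w-weak-group}) reduces this to $A^{\tiny\textcircled{\tiny d},W}WA^{\tiny\textcircled{\tiny d},W}$. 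The analogous computation for (iv) uses $A^{d,W}=A^{\tiny\textcircled{\tiny d},W}WA^{d,W}WA$ and $A^{\tiny\textcircled{\tiny d},W}WA^{d,W}=A^{\tiny\textcircled{\tiny d},W}WA^{\tiny\textcircled{\tiny d},W}$ (both from Lemma~\ref{te0-repres-A-W} / the block forms) to get $BWA^{d,W}=A^{d,W}WA^{d,W}$. For the converses (iii)$\Rightarrow$(i) and (iv)$\Rightarrow$(i), the key observation is that the second equation $AWB=A^{\tiny\textcircled{\tiny d},W}WA$ is common to both lists, and when combined with $BWAWB=B$ it already constitutes system~(\ref{def1-w-weak-group}); hence Theorem~\ref{te1-w-weak-group} forces $B=A^{\otimes,W}$ without even invoking the third listed identity.

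The main subtlety I anticipate is bookkeeping with the auxiliary identities $A^{\tiny\textcircled{\tiny d},W}=A(WA^{\tiny\textcircled{\tiny d},W})^2$, $AWA^{\tiny\textcircled{\tiny d},W}WA=A^{\tiny\textcircled{\tiny d},W}WA$, $A^{d,W}=A^{\tiny\textcircled{\tiny d},W}WA^{d,W}WA$, and $R(A^{\tiny\textcircled{\tiny d},W})=R(A^{d,W})$, $N(A^{\tiny\textcircled{\tiny d},W}WA)=N(A^{\otimes,W})$, all of which are available from the block-matrix representations~(\ref{jed-oper-A})--(\ref{jed-oper-Bwcore}) and from Lemma~\ref{le1-w-weak-group}; the only real ``content'' is recognizing that both (iii) and (iv) contain the defining pair of equations~(\ref{def1-w-weak-group}) as a sub-system, so that the third equation in each is redundant for determining $B$ (though not redundant as a \emph{property}, which is why it is stated). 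If a self-contained argument avoiding appeal to Theorem~\ref{te1-w-weak-group} is preferred, one instead multiplies $BWAWB=B$ on the left by $A^{\tiny\textcircled{\tiny d},W}W$ and uses $AWB=A^{\tiny\textcircled{\tiny d},W}WA$ to telescope to $B=(A^{\tiny\textcircled{\tiny d},W}W)^2A$; I would present this shorter route.
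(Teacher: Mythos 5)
There is a genuine gap in your converse directions (iii)$\Rightarrow$(i) and (iv)$\Rightarrow$(i). You assert that the pair $BWAWB=B$ and $AWB=A^{\tiny\textcircled{\tiny d},W}WA$ ``is precisely system~(\ref{def1-w-weak-group})'', so that Theorem~\ref{te1-w-weak-group} forces $B=A^{\otimes,W}$ and the third equation in (iii)/(iv) is redundant. But system~(\ref{def1-w-weak-group}) reads $AWBWB=B$, not $BWAWB=B$, and the two are genuinely different: from $AWB=A^{\tiny\textcircled{\tiny d},W}WA$ one gets $AWBWB=A^{\tiny\textcircled{\tiny d},W}W(AWB)=(A^{\tiny\textcircled{\tiny d},W}W)^2A$, so $AWBWB=B$ pins $B$ down at once, whereas $BWAWB=BW(AWB)=BWA^{\tiny\textcircled{\tiny d},W}WA$ still contains the unknown factor $BWA^{\tiny\textcircled{\tiny d},W}$ --- which is exactly what the third condition in (iii) supplies. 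The two-equation subsystem of (iii) does not determine $B$: take $X=Y$, $W=I$ and $A=\left[\begin{smallmatrix}1&1\\0&0\end{smallmatrix}\right]$, so that $A^{\tiny\textcircled{\tiny d}}A=A$; then every $B=\left[\begin{smallmatrix}a&a\\1-a&1-a\end{smallmatrix}\right]$ satisfies $AB=A^{\tiny\textcircled{\tiny d}}A$ and $BAB=B$, while $A^{\otimes}=A$ corresponds only to $a=1$. The same objection defeats your proposed ``shorter route'': left-multiplying $BWAWB=B$ by $A^{\tiny\textcircled{\tiny d},W}W$ gives $A^{\tiny\textcircled{\tiny d},W}WBWAWB=A^{\tiny\textcircled{\tiny d},W}WB$, which does not telescope to $B=(A^{\tiny\textcircled{\tiny d},W}W)^2A$ because nothing identifies $A^{\tiny\textcircled{\tiny d},W}WB$ or $BWA^{\tiny\textcircled{\tiny d},W}$.

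The repair is the paper's argument: for (iii)$\Rightarrow$(i) use the third condition essentially, $B=BW(AWB)=(BWA^{\tiny\textcircled{\tiny d},W})WA=A^{\tiny\textcircled{\tiny d},W}WA^{\tiny\textcircled{\tiny d},W}WA=A^{\otimes,W}$; and handle (iv) by proving (iii)$\Leftrightarrow$(iv) directly, translating $BWA^{\tiny\textcircled{\tiny d},W}=A^{\tiny\textcircled{\tiny d},W}WA^{\tiny\textcircled{\tiny d},W}$ into $BWA^{d,W}=A^{d,W}WA^{d,W}$ and back via $A^{\tiny\textcircled{\tiny d},W}=A^{d,W}WAWA^{\tiny\textcircled{\tiny d},W}$ and $A^{d,W}=A^{\tiny\textcircled{\tiny d},W}WAWA^{d,W}$. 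Your treatment of (i)$\Leftrightarrow$(ii) and the forward implications out of (i) is correct and essentially identical to the paper's.
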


\begin{proof} (i) $\Rightarrow$ (ii) $\wedge$ (iii): The equality $B=(A^{\tiny\textcircled{\tiny
d},W}W)^2A$ gives $B=A^{\tiny\textcircled{\tiny
d},W}WAW(A^{\tiny\textcircled{\tiny
d},W}W)^2A=A^{\tiny\textcircled{\tiny d},W}WAWB$ and
$BWA^{\tiny\textcircled{\tiny d},W}=(A^{\tiny\textcircled{\tiny
d},W}W)^2AWA^{\tiny\textcircled{\tiny
d},W}=A^{\tiny\textcircled{\tiny d},W}WA^{\tiny\textcircled{\tiny
d},W}$. The rest is clear.

(ii) $\Rightarrow$ (i): It follows by
$B=A^{\tiny\textcircled{\tiny
d},W}W(AWB)=A^{\tiny\textcircled{\tiny
d},W}WA^{\tiny\textcircled{\tiny d},W}WA=A^{\otimes,W}$.

(iii) $\Rightarrow$ (i): We have that
$B=BW(AWB)=(BWA^{\tiny\textcircled{\tiny
d},W})WA=A^{\tiny\textcircled{\tiny
d},W}WA^{\tiny\textcircled{\tiny d},W}WA=A^{\otimes,W}$.

(iii) $\Leftrightarrow$ (iv): Using $A^{\tiny\textcircled{\tiny
d},W}=A^{d,W}WAWA^{\tiny\textcircled{\tiny d},W}$ and
$A^{d,W}=A^{\tiny\textcircled{\tiny d},W}WAWA^{d,W}$, we obtain
these equivalences.
%
\end{proof}

Applying Theorem \ref{te3-w-weak-group}, we can characterize the
weak group inverse in the following way.

\begin{corollary}\label{cor4-weak-group} Let $A\in\B(X)^d$.
Then, for $B\in\B(X)$, the following statements are equivalent:
\begin{itemize}
\item[\rm(i)] $B$ is the weak group inverse of $A$;

\item[\rm(ii)] $B$ satisfies
$$A^{\tiny\textcircled{\tiny d}}AB=B\quad and \quad AB=A^{\tiny\textcircled{\tiny d}}A;$$

\item[\rm(iii)] $B$ satisfies
$$BAB=B,\quad AB=A^{\tiny\textcircled{\tiny d}}A\quad and\quad BA^{\tiny\textcircled{\tiny d}}=
A^{\tiny\textcircled{\tiny d}}A^{\tiny\textcircled{\tiny d}};$$

\item[\rm(iv)] $B$ satisfies
$$BAB=B,\quad AB=A^{\tiny\textcircled{\tiny d}}A\quad and\quad BA^{d}=
(A^{d})^2.$$
\end{itemize}
\end{corollary}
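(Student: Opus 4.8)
The plan is to derive Corollary \ref{cor4-weak-group} directly from Theorem \ref{te3-w-weak-group} by specializing to $X=Y$ and $W=I$, exactly as the preceding corollaries (Corollary \ref{cor1-weak-group} and its companions) were obtained from their parent theorems. So the first step is to observe that when $W=I\in\B(X)$, the Wg--Drazin invertibility of $A$ reduces to generalized Drazin invertibility, i.e. $A\in\B(X)^{d,W}$ becomes $A\in\B(X)^d$; the weighted core--EP inverse $A^{\tiny\textcircled{\tiny d},W}$ becomes the core--EP inverse $A^{\tiny\textcircled{\tiny d}}$; the Wg--Drazin inverse $A^{d,W}$ becomes the generalized Drazin inverse $A^d$; and the weighted weak group inverse $A^{\otimes,W}=(A^{\tiny\textcircled{\tiny d},W}W)^2A$ collapses to $A^{\otimes}=(A^{\tiny\textcircled{\tiny d}})^2A$, which is precisely the weak group inverse from the Definition above.

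With these identifications in place, each of the four conditions in Theorem \ref{te3-w-weak-group} translates termwise into the corresponding condition in Corollary \ref{cor4-weak-group}. Concretely: (i) ``$B$ is the weighted weak group inverse of $A$'' becomes ``$B$ is the weak group inverse of $A$''; condition (ii), $A^{\tiny\textcircled{\tiny d},W}WAWB=B$ and $AWB=A^{\tiny\textcircled{\tiny d},W}WA$, becomes $A^{\tiny\textcircled{\tiny d}}AB=B$ and $AB=A^{\tiny\textcircled{\tiny d}}A$; condition (iii), $BWAWB=B$, $AWB=A^{\tiny\textcircled{\tiny d},W}WA$, $BWA^{\tiny\textcircled{\tiny d},W}=A^{\tiny\textcircled{\tiny d},W}WA^{\tiny\textcircled{\tiny d},W}$, becomes $BAB=B$, $AB=A^{\tiny\textcircled{\tiny d}}A$, $BA^{\tiny\textcircled{\tiny d}}=A^{\tiny\textcircled{\tiny d}}A^{\tiny\textcircled{\tiny d}}$; and condition (iv), with the substitution $A^{d,W}WA^{d,W}\mapsto A^d A^d=(A^d)^2$, becomes $BAB=B$, $AB=A^{\tiny\textcircled{\tiny d}}A$, $BA^d=(A^d)^2$. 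Since Theorem \ref{te3-w-weak-group} asserts the equivalence of (i)--(iv) for an arbitrary $W\in\B(Y,X)\backslash\{0\}$ and arbitrary $A\in\B(X,Y)^{d,W}$, the instance $W=I$ is a legitimate special case, and the equivalence of the four translated statements is immediate.

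The only point requiring a sanity check — and this is where I expect the (minor) obstacle to lie — is verifying that the defining relations of the core--EP inverse and the generalized Drazin inverse specialize correctly under $W=I$, so that the auxiliary identities invoked inside the proof of Theorem \ref{te3-w-weak-group} (namely $A^{\tiny\textcircled{\tiny d},W}=A^{d,W}WAWA^{\tiny\textcircled{\tiny d},W}$ and $A^{d,W}=A^{\tiny\textcircled{\tiny d},W}WAWA^{d,W}$, used for (iii)$\Leftrightarrow$(iv)) remain valid. These become $A^{\tiny\textcircled{\tiny d}}=A^d A A^{\tiny\textcircled{\tiny d}}$ and $A^d=A^{\tiny\textcircled{\tiny d}}AA^d$, which indeed hold for the core--EP inverse on Hilbert spaces (this follows from Lemma \ref{te0-repres-A-W} with $W=I$, or directly from the block forms in Corollary \ref{cor1-weak-group}). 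Once this is noted, no further work is needed: the proof is simply ``Put $W=I$ in Theorem \ref{te3-w-weak-group}.''

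\begin{proof}
Take $X=Y$ and $W=I\in\B(X)\backslash\{0\}$ in Theorem \ref{te3-w-weak-group}. Then $A\in\B(X,Y)^{d,W}$ reduces to $A\in\B(X)^d$, the weighted core--EP inverse $A^{\tiny\textcircled{\tiny d},W}$ becomes the core--EP inverse $A^{\tiny\textcircled{\tiny d}}$, the $Wg$--Drazin inverse $A^{d,W}$ becomes the generalized Drazin inverse $A^d$, and the weighted weak group inverse $A^{\otimes,W}=(A^{\tiny\textcircled{\tiny d},W}W)^2A$ becomes the weak group inverse $A^{\otimes}=(A^{\tiny\textcircled{\tiny d}})^2A$. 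Under these identifications, conditions (i)--(iv) of Theorem \ref{te3-w-weak-group} translate verbatim into conditions (i)--(iv) of the present statement; in particular $A^{d,W}WA^{d,W}$ becomes $(A^d)^2$, and the auxiliary relations $A^{\tiny\textcircled{\tiny d},W}=A^{d,W}WAWA^{\tiny\textcircled{\tiny d},W}$ and $A^{d,W}=A^{\tiny\textcircled{\tiny d},W}WAWA^{d,W}$ used in the proof of Theorem \ref{te3-w-weak-group} become $A^{\tiny\textcircled{\tiny d}}=A^dAA^{\tiny\textcircled{\tiny d}}$ and $A^d=A^{\tiny\textcircled{\tiny d}}AA^d$, which hold by Lemma \ref{te0-repres-A-W} with $W=I$. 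Hence the equivalence of (i)--(iv) follows at once from Theorem \ref{te3-w-weak-group}.
\end{proof}
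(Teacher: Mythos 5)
Your proposal is correct and coincides with the paper's own treatment: the paper states this corollary with no separate argument, introducing it by ``Applying Theorem \ref{te3-w-weak-group}, we can characterize the weak group inverse in the following way,'' i.e.\ it is obtained exactly by setting $X=Y$ and $W=I$ in Theorem \ref{te3-w-weak-group}. Your additional check that the auxiliary identities specialize correctly under $W=I$ is a reasonable (if unstated in the paper) sanity verification, but nothing in your route differs from the intended one.
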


In the case that $X$ and $Y$ are finite dimensional, the condition
$BWA^{d,W}= A^{d,W}WA^{d,W}$ ($BA^{d}= (A^{d})^2$) of Theorem
\ref{te3-w-weak-group}(iv) (Corollary \ref{cor4-weak-group}(iv))
can be replaced with the equivalent condition $B(WA)^{k+1}=(WA)^k$
for $k=ind(WA)$ ($BA^{k+1}=A^k$ for $k=ind(A)$).

Using idempotents and orthogonal projectors, we present some
representations of the weighted weak group inverse of $A$ in the
next theorem.

\begin{theorem}\label{te4-w-weak-group} Let $W\in\B(Y,X)\backslash\{0\}$ and $A\in\B(X,Y)^{d,W}$.
Then the following statements holds:
\begin{itemize}
\item[\rm(i)] $A^{\otimes,W}=A^{\tiny\textcircled{\tiny
d},W}P_{R(WA^{d,W}), N(A^{\tiny\textcircled{\tiny d},W}WA)}$;

\item[\rm(ii)] $A^{\otimes,W}=A^{d,W}P_{R(WA^{d,W}),
N(A^{\tiny\textcircled{\tiny d},W}WA)}$;


\item[\rm(iii)] $WP_{R((AW)^d)}$ is Moore--Penrose invertible,
$WA(WA)^{\tiny\textcircled{\tiny d}}WA$ is group invertible and
$$A^{\otimes,W}=(WP_{R((AW)^d)})^\dag(WA(WA)^{\tiny\textcircled{\tiny
d}}WA)^\#=(WP_{R((AW)^d)})^\dag(P_{R((WA)^d)}WA)^\#;$$

\item[\rm(iv)] $A^{\otimes,W}=[(AW)^2]^{\tiny\textcircled{\tiny
d}}AWA^{\tiny\textcircled{\tiny
d},W}WA=[(AW)^2]^{\tiny\textcircled{\tiny d}}AP_{R(WA^{d,W}),
N(A^{\tiny\textcircled{\tiny d},W}WA)}$;

\item[\rm(v)] $WP_{R((AW)^d)}$ is Moore--Penrose invertible and
$A^{\otimes,W}=[(AW)^d]^2(WP_{R((AW)^d)})^\dag WA$;

\item[\rm(vi)] $WP_{R((AW)^d)}$ is Moore--Penrose invertible and
$A^{\otimes,W}=(WP_{R((AW)^d)})^\dag[(WA)^2]^{\tiny\textcircled{\tiny
d}}WA$;

\item[\rm(vii)] $(AW)^3(AW)^d$ is group invertible and
$A^{\otimes,W}=[(AW)^3(AW)^d]^{\tiny\textcircled{\tiny
\#}}AWA^{\tiny\textcircled{\tiny d},W}WA$.
\end{itemize}
\end{theorem}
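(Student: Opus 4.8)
The plan is to establish each of the seven representations by reducing them to identities already available from Lemma~\ref{te0-repres-A-W}, Theorem~\ref{te1-w-weak-group}, and Lemma~\ref{le1-w-weak-group}. The workhorse throughout will be the operator matrix calculus: $A$ and $W$ decompose as in (\ref{jed-oper-A}) and (\ref{jed-oper-W}), the weighted core--EP inverse is the $2\times2$ block in (\ref{jed-oper-Bwcore}), the $Wg$-Drazin inverse is (\ref{oper-WgD-A}), and $A^{\otimes,W}$ is the explicit block in (\ref{jed-oper-B-w-weak-group}). I would also carry along the four projector identities from Lemma~\ref{le1-w-weak-group}, in particular $WAWA^{\otimes,W}=P_{R(WA^{d,W}),\,N(A^{\tiny\textcircled{\tiny d},W}WA)}$, and the auxiliary facts noted in the proof of Lemma~\ref{le1-w-weak-group}, namely $A(WA^{\tiny\textcircled{\tiny d},W})^2=A^{\tiny\textcircled{\tiny d},W}$, $AWA^{\otimes,W}=A^{\tiny\textcircled{\tiny d},W}WA$, and $A^{\otimes,W}=(A^{\tiny\textcircled{\tiny d},W}W)^2A$.

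For (i) and (ii): write $P:=P_{R(WA^{d,W}),\,N(A^{\tiny\textcircled{\tiny d},W}WA)}=WA^{\tiny\textcircled{\tiny d},W}WA$ (this is the projector computed in Lemma~\ref{le1-w-weak-group}(ii)). Then $A^{\tiny\textcircled{\tiny d},W}P=A^{\tiny\textcircled{\tiny d},W}WA^{\tiny\textcircled{\tiny d},W}WA=(A^{\tiny\textcircled{\tiny d},W}W)^2A=A^{\otimes,W}$, giving (i); for (ii) multiply instead by $A^{d,W}$ and use $A^{d,W}WA^{\tiny\textcircled{\tiny d},W}=A^{\tiny\textcircled{\tiny d},W}WA^{\tiny\textcircled{\tiny d},W}$ (which follows from $A^{\tiny\textcircled{\tiny d},W}=A^{d,W}WAWA^{\tiny\textcircled{\tiny d},W}$ together with $WAWA^{\tiny\textcircled{\tiny d},W}$ being the relevant projector). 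For (iv) and (vii): here one must first verify the stated group-invertibility. Using the block forms, $(AW)^2$ restricted to $R((AW)^d)$ acts as $(A_1W_1)^2$, which is invertible, while on the complement $A_3W_3$ is quasinilpotent, so $(AW)^2$ is again $Wg$-Drazin invertible with a well-defined core--EP inverse; similarly $(AW)^3(AW)^d$ has invertible compression $A_1W_1$ to $R((AW)^d)$ and zero block elsewhere, hence index $\le1$. Then $[(AW)^2]^{\tiny\textcircled{\tiny d}}AWA^{\tiny\textcircled{\tiny d},W}WA$ and $[(AW)^3(AW)^d]^{\tiny\textcircled{\tiny\#}}AWA^{\tiny\textcircled{\tiny d},W}WA$ are computed blockwise and matched against (\ref{jed-oper-B-w-weak-group}), using $AWA^{\tiny\textcircled{\tiny d},W}WA=A^{\tiny\textcircled{\tiny d},W}WA$ which has block $\bigl[\begin{smallmatrix}W_1^{-1}&\ast\\0&0\end{smallmatrix}\bigr]$, and the second equalities follow by replacing $AWA^{\tiny\textcircled{\tiny d},W}WA$ with $AP$.

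For (iii), (v), (vi): the new ingredient is the Moore--Penrose invertibility of $WP_{R((AW)^d)}$. I would argue this from the block decomposition: $P_{R((AW)^d)}$ is the orthogonal projector onto the first summand in the codomain splitting of (\ref{jed-oper-A}), so $WP_{R((AW)^d)}$ has matrix $\bigl[\begin{smallmatrix}W_1&0\\0&0\end{smallmatrix}\bigr]$ with $W_1$ invertible, whence it is Moore--Penrose invertible with $(WP_{R((AW)^d)})^\dag=\bigl[\begin{smallmatrix}W_1^{-1}&0\\0&0\end{smallmatrix}\bigr]$ — but I must be careful, because the MP inverse is adjoint-dependent and the two block decompositions are generally not orthogonal, so the honest computation is $(WP_{R((AW)^d)})^\dag=P_{R((AW)^d)}W^*(WP_{R((AW)^d)}W^*)^{-1}$-type formula, or better, verify the four Penrose equations directly using that $R(WP_{R((AW)^d)})=R(WA^{d,W})$ is closed (as it equals $R(W_1)$ modulo the decomposition, and $W_1$ is invertible) hence $WP_{R((AW)^d)}$ has closed range. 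Similarly $WA(WA)^{\tiny\textcircled{\tiny d}}WA$ equals $P_{R((WA)^d)}WA$ by definition of the core--EP inverse, and its group inverse exists because its block form has invertible compression to $R((WA)^d)$. Then (iii) is obtained by computing the product of the two blocks; (v) and (vi) similarly, using $[(AW)^d]^2$ and $[(WA)^2]^{\tiny\textcircled{\tiny d}}$ in block form and the identities $(AW)^d=AWA^{d,W}=A^{d,W}WA$ and $[(WA)^2]^{\tiny\textcircled{\tiny d}}WA$ simplifying on $R((WA)^d)$.

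The main obstacle I anticipate is (iii) — specifically, pinning down $(WP_{R((AW)^d)})^\dag$ correctly. The Moore--Penrose inverse is sensitive to the Hilbert space inner product, so one cannot simply invert the ``$W_1$ block'' and declare victory; one has to genuinely check that $R(WP_{R((AW)^d)})$ is closed and then either exhibit the four Penrose equations or identify the MP inverse via the reverse-order-type formula. Once that is handled, the remaining verifications are routine $2\times2$ block multiplications, each ending in a comparison with (\ref{jed-oper-B-w-weak-group}); I would present one representative computation in full (say (iii) or (iv)) and indicate that the others follow by the same block bookkeeping together with the standard identities $(AW)^d=A^{d,W}WA=AWA^{d,W}$, $A^{\tiny\textcircled{\tiny d},W}=A^{d,W}WAWA^{\tiny\textcircled{\tiny d},W}$, and $AWA^{\otimes,W}=A^{\tiny\textcircled{\tiny d},W}WA$.
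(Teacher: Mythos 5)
Your proposal is correct and follows essentially the same route as the paper: (i)--(ii) by direct manipulation of the identities $A^{\otimes,W}=(A^{\tiny\textcircled{\tiny d},W}W)^2A$ and $P_{R(WA^{d,W}),N(A^{\tiny\textcircled{\tiny d},W}WA)}=WA^{\tiny\textcircled{\tiny d},W}WA$, and (iii)--(vii) by $2\times 2$ block computations matched against (\ref{jed-oper-B-w-weak-group}). Your one worry, about reading off $(WP_{R((AW)^d)})^\dag=\bigl[\begin{smallmatrix}W_1^{-1}&0\\0&0\end{smallmatrix}\bigr]$ from the block form, is unnecessary: the splittings $R((AW)^d)\oplus N[((AW)^d)^*]$ and $R((WA)^d)\oplus N[((WA)^d)^*]$ are orthogonal (since $N(T^*)=R(T)^\perp$ and these ranges are closed), so the resulting projectors $\bigl[\begin{smallmatrix}I&0\\0&0\end{smallmatrix}\bigr]$ are self-adjoint and the four Penrose equations hold at once, exactly as the paper assumes.
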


\begin{proof} (i) It follows by $A^{\otimes,W}=(A^{\tiny\textcircled{\tiny
d},W}W)^2A=A^{\tiny\textcircled{\tiny d},W}P_{R(WA^{d,W}),
N(A^{\tiny\textcircled{\tiny d},W}WA)}$.

(ii) We get
\begin{eqnarray*}A^{\otimes,W}&=&(A^{\tiny\textcircled{\tiny
d},W}W)^2A=A^{d,W}WAW(A^{\tiny\textcircled{\tiny
d},W}W)^2A=A^{d,W}WA^{\tiny\textcircled{\tiny
d},W}WA\\&=&A^{d,W}P_{R(WA^{d,W}), N(A^{\tiny\textcircled{\tiny
d},W}WA)}.\end{eqnarray*}

(iii) Let $A$ and $W$ be represented as in (\ref{jed-oper-A}) and
(\ref{jed-oper-W}), respectively. Notice that the orthogonal
projector $P_{R((AW)^d)}$ has the following representation:
$$P_{R((AW)^d)}=\sbmatrix{cc} I&0\\0&0\endsbmatrix:\sbmatrix{c}
R((AW)^d)\\N[((AW)^d)^*]\endsbmatrix\rightarrow\sbmatrix{c}
R((AW)^d)\\N[((AW)^d)^*]\endsbmatrix.$$ We observe that
$WP_{R((AW)^d)}=\sbmatrix{cc} W_1&0\\0&0\endsbmatrix$ is
Moore--Penrose invertible and
$$(WP_{R((AW)^d)})^\dag=\sbmatrix{cc}
W_1^{-1}&0\\0&0\endsbmatrix.$$ Because $A$ is $W${\it g}-Drazin
invertible, then $WA$ is generalized Drazin invertible,
$$WA=\sbmatrix{cc}
W_1A_1&W_1A_2+W_2A_3\\0&W_3A_3\endsbmatrix\quad{\rm
and}\quad(WA)^{\tiny\textcircled{\tiny d}}=\sbmatrix{cc}
(W_1A_1)^{-1}&0\\0&0\endsbmatrix.$$ We now have that
$WA(WA)^{\tiny\textcircled{\tiny d}}WA=\sbmatrix{cc}
W_1A_1&W_1A_2+W_2A_3\\0&0\endsbmatrix$ is group invertible and
$$(WA(WA)^{\tiny\textcircled{\tiny d}}WA)^\#=\sbmatrix{cc}
(W_1A_1)^{-1}&(W_1A_1)^{-2}(W_1A_2+W_2A_3)\\0&0\endsbmatrix.$$
Therefore,
\begin{eqnarray*}(WP_{R((AW)^d)})^\dag(WA(WA)^{\tiny\textcircled{\tiny
d}}WA)^\#&=&\sbmatrix{cc}
(W_1A_1W_1)^{-1}&(A_1W_1)^{-2}(A_2+W_1^{-1}W_2A_3)\\0&0\endsbmatrix\\&=&A^{\otimes,W}.\end{eqnarray*}

(iv) Since $A$ is $W${\it g}-Drazin invertible, then $AW$ is
generalized Drazin invertible and so $(AW)^2$ generalized Drazin
invertible too. Hence, $[(AW)^2]^{\tiny\textcircled{\tiny d}}$
exists and, using (\ref{jed-oper-A}) and (\ref{jed-oper-W}),
$$[(AW)^2]^{\tiny\textcircled{\tiny d}}=\sbmatrix{cc}
(A_1W_1)^{-2}&0\\0&0\endsbmatrix.$$ By (\ref{jed-oper-Bwcore}), we
obtain
\begin{eqnarray*}[(AW)^2]^{\tiny\textcircled{\tiny
d}}AWA^{\tiny\textcircled{\tiny d},W}WA&=&\sbmatrix{cc}
(A_1W_1)^{-2}&0\\0&0\endsbmatrix\sbmatrix{cc}
A_1&A_2+W_1^{-1}W_2A_3\\0&0\endsbmatrix\\&=&\sbmatrix{cc}
(W_1A_1W_1)^{-1}&(A_1W_1)^{-2}(A_2+W_1^{-1}W_2A_3)\\0&0\endsbmatrix\\&=&A^{\otimes,W}.
\end{eqnarray*}

Similarly, we verify parts (v)--(vi).
\end{proof}

We have new representations for the weak group inverse by Theorem
\ref{te4-w-weak-group}.

\begin{corollary}
Let $A\in\B(X)^d$. Then the following statements holds:
\begin{itemize}
\item[\rm(i)] $A^{\otimes}=A^{\tiny\textcircled{\tiny
d}}P_{R(A^{d}), N(A^{\tiny\textcircled{\tiny d}}A)}$;

\item[\rm(ii)] $A^{\otimes}=A^{d}P_{R(A^{d}),
N(A^{\tiny\textcircled{\tiny d}}A)}$;

\item[\rm(iii)] $AA^{\tiny\textcircled{\tiny d}}A$ is group
invertible and $A^{\otimes}=(AA^{\tiny\textcircled{\tiny
d}}A)^\#=(P_{R(A^d))}A)^\#;$

\item[\rm(iv)] $A^{\otimes}=(A^2)^{\tiny\textcircled{\tiny
d}}AA^{\tiny\textcircled{\tiny d}}A=(A^2)^{\tiny\textcircled{\tiny
d}}AP_{R(A^{d}), N(A^{\tiny\textcircled{\tiny d}}A)}$;

\item[\rm(v)] $A^{\otimes}=(A^d)^2P_{R(A^d)}A$;

\item[\rm(vi)] $A^{\otimes}=(A^2)^{\tiny\textcircled{\tiny d}}A$;

\item[\rm(vii)] $A^3A^d$ is group invertible and
$A^{\otimes}=(A^3A^d)^{\tiny\textcircled{\tiny
\#}}AA^{\tiny\textcircled{\tiny
d}}A=(A^3A^d)^{\tiny\textcircled{\tiny \#}}A$.
\end{itemize}
\end{corollary}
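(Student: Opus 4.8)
The plan is to reduce each of the seven items to the matrix computation already established for $A^{\otimes,W}$ in Theorem~\ref{te1-w-weak-group} and parts (iii)--(vi) of Theorem~\ref{te4-w-weak-group}, by specializing $X=Y$, $W=I$, $A_1\in\B(R(A^d))$ invertible and $A_3\in\B[N((A^d)^*)]$ quasinilpotent. Under this substitution the block form of $A$ in (\ref{jed-oper-A123}) replaces (\ref{jed-oper-A}), the weight blocks $W_1,W_2,W_3$ become $I,0,0$, and the formula (\ref{jed-oper-B-w-weak-group}) collapses to $B=\sbmatrix{cc}A_1^{-1}&A_1^{-2}A_2\\0&0\endsbmatrix$, which by the Definition is exactly $A^{\otimes}=(A^{\tiny\textcircled{\tiny d}})^2A$. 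So the core object $A^{\otimes}$ has the known $2\times 2$ operator-matrix representation, and each item below is verified by plugging in.

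First I would record the block forms that specialize from Lemma~\ref{te0-repres-A-W}: $A^{\tiny\textcircled{\tiny d}}=\sbmatrix{cc}A_1^{-1}&0\\0&0\endsbmatrix$, $A^d=\sbmatrix{cc}A_1^{-1}&U_0\\0&0\endsbmatrix$ with $U_0=\sum_{n=0}^\infty A_1^{-(n+2)}A_2A_3^n$, and $P_{R(A^d)}=\sbmatrix{cc}I&0\\0&0\endsbmatrix$. For (i) and (ii): since $AA^{\otimes}$ is the projector onto $R(A^d)$ along $N(A^{\tiny\textcircled{\tiny d}}A)$ by the preceding Corollary, writing that projector as $P_{R(A^d),N(A^{\tiny\textcircled{\tiny d}}A)}$ and using $A^{\otimes}=A^{\tiny\textcircled{\tiny d}}AA^{\otimes}$ (which follows from $A^{\otimes}=(A^{\tiny\textcircled{\tiny d}})^2A$ and $A^{\tiny\textcircled{\tiny d}}AA^{\tiny\textcircled{\tiny d}}=A^{\tiny\textcircled{\tiny d}}$) gives (i); then (ii) follows from (i) together with $A^{\tiny\textcircled{\tiny d}}=A^dAA^{\tiny\textcircled{\tiny d}}$ applied on the left, exactly mirroring Theorem~\ref{te4-w-weak-group}(i)--(ii). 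For (iii) I would compute $AA^{\tiny\textcircled{\tiny d}}A=\sbmatrix{cc}A_1&A_2\\0&0\endsbmatrix=P_{R(A^d)}A$, check it is group invertible with group inverse $\sbmatrix{cc}A_1^{-1}&A_1^{-2}A_2\\0&0\endsbmatrix=A^{\otimes}$; note the $(WP_{R((AW)^d)})^\dag$ factor of Theorem~\ref{te4-w-weak-group}(iii) becomes $P_{R(A^d)}^\dag=P_{R(A^d)}=I$ on the relevant block, so it drops out. Items (iv), (v), (vi) come from Theorem~\ref{te4-w-weak-group}(iv), (v), (vi) with the $(WP_{R((AW)^d)})^\dag$ factors becoming identities: $(A^2)^{\tiny\textcircled{\tiny d}}=\sbmatrix{cc}A_1^{-2}&0\\0&0\endsbmatrix$ times $AA^{\tiny\textcircled{\tiny d}}A=\sbmatrix{cc}A_1&A_2\\0&0\endsbmatrix$ gives $A^{\otimes}$, proving (iv), and (vi) simplifies further because $A^{\tiny\textcircled{\tiny d}}A=\sbmatrix{cc}I&A_1^{-1}A_2\\0&0\endsbmatrix$ already makes $(A^2)^{\tiny\textcircled{\tiny d}}(A^2)^{\tiny\textcircled{\tiny d}}A^2\cdots$ collapse, so in fact $(A^2)^{\tiny\textcircled{\tiny d}}A=A^{\otimes}$ directly; (v) is the block identity $\sbmatrix{cc}A_1^{-2}&*\\0&0\endsbmatrix\sbmatrix{cc}I&0\\0&0\endsbmatrix\sbmatrix{cc}A_1&A_2\\0&A_3\endsbmatrix$.

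For (vii) I would compute $A^3A^d=\sbmatrix{cc}A_1^2&A_1A_2+A_2A_3\\0&0\endsbmatrix$ (using $A^3A^d=A^2P_{R(A^d)}$ plus the off-diagonal bookkeeping $A^2A^d$ acting on the second block), verify $\mathrm{ind}\le 1$ so the core-EP inverse is the core inverse, and that $(A^3A^d)^{\tiny\textcircled{\tiny \#}}AA^{\tiny\textcircled{\tiny d}}A$ reduces to $A^{\otimes}$; the final equality $(A^3A^d)^{\tiny\textcircled{\tiny \#}}A=A^{\otimes}$ holds because $AA^{\tiny\textcircled{\tiny d}}A$ and $A$ have the same action on $R(A^d)$ after the core-inverse projection. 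The main obstacle is (vii), specifically getting the $(1,2)$-block of $A^3A^d$ right and confirming that the block matrix $\sbmatrix{cc}A_1^2&C\\0&0\endsbmatrix$ has index one with core inverse whose $(1,1)$ entry is $A_1^{-2}$ and whose contribution to the product is precisely $A^{\otimes}$; every other item is a short one- or two-line block multiplication once the specialized forms are on the table. I would present the proof by writing out the specialized block forms once, then verifying (i)--(vii) in that order, with (iii)--(vi) grouped as parallel specializations of Theorem~\ref{te4-w-weak-group} and (vii) handled last in full.
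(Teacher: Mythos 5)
Your proposal is correct and takes essentially the same route as the paper, which proves this corollary in one line by specializing Theorem \ref{te4-w-weak-group} to $X=Y$, $W=I$ and noting that $(P_{R(A^d)})^\dag=P_{R(A^d)}$ --- exactly the reduction you carry out block by block. The only slip is the $(1,2)$-block of $A^3A^d$ in (vii), which equals $\sum_{n\geq 0}A_1^{1-n}A_2A_3^n$ rather than $A_1A_2+A_2A_3$; this is harmless because the core inverse of $\sbmatrix{cc}A_1^2&C\\0&0\endsbmatrix$ is $\sbmatrix{cc}A_1^{-2}&0\\0&0\endsbmatrix$ for any $C$, so the conclusion you draw is unaffected.
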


\begin{proof} Because $P_{R(A^d)}$ is an orthogonal projector,
notice that $P_{R(A^d))}$ is Moore--Penrose invertible and
$(P_{R(A^d)})^\dag=P_{R(A^d)}$. Then we can easily verify this
result.
\end{proof}

Recall that $A\in\B(X,Y)^{d,W}$ with $A^{d,W}=B\in\B(X,Y)$ if and
only if $AW\in\B(Y)^{d}$ with $(AW)^d=BW$ if and only if
$WA\in\B(X)^{d}$ with $(WA)^d=WB$ \cite{AK}. For the weighted
core--EP inverse, the situation is similar in the case of the
operator $WA$, but it is a little different for $AW$
\cite{DM-weighted-core-EP}. We see now that the weighted weak
group inverse acts as the weighted core--EP inverse.

\begin{theorem}\label{te5-w-weak-group} Let $W\in\B(Y,X)\backslash\{0\}$ and let $A\in\B(X,Y)$.
\begin{itemize}
\item[{\rm (a)}] Then the following statements are equivalent:
\begin{itemize}
\item[\rm(i)] $A$ is weighted weak group invertible with
$A^{\otimes,W}=B$;

\item[\rm(ii)] $WA$ is weak group invertible with
$(WA)^{\otimes}=WB.$
\end{itemize}
In addition,
$$A^{\otimes,W}=A[(WA)^{\otimes}]^2.$$

\item[{\rm (b)}] If $A$ is $W${\it g}-Drazin invertible, $A$ and
$W$ are represented by {\rm (\ref{jed-oper-A})} and {\rm
(\ref{jed-oper-W})}, respectively, then
\begin{itemize}
\item[\rm(i)] $(AW)^{\otimes}=A^{\otimes,W}W$ if and only if
$W_2A_3W_3=0$;

\item[\rm(ii)] $A^{\otimes,W}=[(AW)^{\otimes}]^2A$ if and only if
$A_2W_3A_3=0$.
\end{itemize}
\end{itemize}
\end{theorem}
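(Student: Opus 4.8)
\textbf{Proof plan for Theorem \ref{te5-w-weak-group}.}
The plan is to split the argument into the two parts (a) and (b), and in each case to reduce everything to block computations using the representations \eqref{jed-oper-A}, \eqref{jed-oper-W} and \eqref{jed-oper-Bwcore}, together with the definition $A^{\otimes,W}=(A^{\tiny\textcircled{\tiny d},W}W)^2A$ and the analogous formula $(WA)^{\otimes}=((WA)^{\tiny\textcircled{\tiny d}})^2(WA)$. For part (a), first I would recall from \cite{DM-weighted-core-EP} that $A$ is $W${\it g}-Drazin invertible if and only if $WA$ is generalized Drazin invertible, and that under the block decomposition one has $(WA)^{\tiny\textcircled{\tiny d}}=W A^{\tiny\textcircled{\tiny d},W}$ (this is the ``$WA$ case is similar'' remark preceding the theorem). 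Then the equivalence (i)$\Leftrightarrow$(ii) follows by a direct substitution: $(WA)^{\otimes}=((WA)^{\tiny\textcircled{\tiny d}})^2(WA)=(WA^{\tiny\textcircled{\tiny d},W})^2(WA)=W(A^{\tiny\textcircled{\tiny d},W}W)^2A=WA^{\otimes,W}$, so $A^{\otimes,W}=B$ forces $(WA)^{\otimes}=WB$ and, using that $W\neq 0$ and the uniqueness of the block form, conversely. For the additional identity $A^{\otimes,W}=A[(WA)^{\otimes}]^2$, I would compute $A[(WA)^{\otimes}]^2=A(WA^{\otimes,W})^2=AWA^{\otimes,W}WA^{\otimes,W}$ and then apply Lemma \ref{le1-w-weak-group}, using $AWA^{\otimes,W}=A^{\tiny\textcircled{\tiny d},W}WA$ together with $A(WA^{\tiny\textcircled{\tiny d},W})^2=A^{\tiny\textcircled{\tiny d},W}$ (established inside the proof of Lemma \ref{le1-w-weak-group}(v)), which collapses the product to $(A^{\tiny\textcircled{\tiny d},W}W)^2A=A^{\otimes,W}$.

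For part (b), the strategy is to write out the relevant operators as $2\times 2$ blocks and compare. From \eqref{jed-oper-A}, \eqref{jed-oper-W} one has $AW=\sbmatrix{cc}A_1W_1 & A_1W_2+A_2W_3\\0 & A_3W_3\endsbmatrix$ acting on $R((AW)^d)\oplus N[((AW)^d)^*]$, and the core--EP inverse is $(AW)^{\tiny\textcircled{\tiny d}}=\sbmatrix{cc}(A_1W_1)^{-1} & 0\\0 & 0\endsbmatrix$; hence, by the same computation as in Theorem \ref{te4-w-weak-group}(iii) applied to $AW$ in place of $WA$, one gets
$$(AW)^{\otimes}=((AW)^{\tiny\textcircled{\tiny d}})^2(AW)=\sbmatrix{cc}(A_1W_1)^{-1} & (A_1W_1)^{-2}(A_1W_2+A_2W_3)\\0 & 0\endsbmatrix.$$
On the other hand, from Theorem \ref{te1-w-weak-group} and the definition, $A^{\otimes,W}W=\sbmatrix{cc}(W_1A_1W_1)^{-1} & (A_1W_1)^{-2}(A_2+W_1^{-1}W_2A_3)\\0 & 0\endsbmatrix\cdot\sbmatrix{cc}W_1 & W_2\\0 & W_3\endsbmatrix$; carrying out this product and comparing the $(1,1)$ and $(1,2)$ entries with those of $(AW)^{\otimes}$, the $(1,1)$ entries automatically agree (both equal $(A_1W_1)^{-1}$), and equality of the $(1,2)$ entries reduces, after simplification, exactly to the condition $W_2A_3W_3=0$. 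This proves (i). For (ii), one computes $[(AW)^{\otimes}]^2A$ in blocks using the displayed form of $(AW)^{\otimes}$ and compares with the block form of $A^{\otimes,W}$ from \eqref{jed-oper-B-w-weak-group}; the $(1,1)$ entries coincide and the difference of the $(1,2)$ entries collapses to a scalar multiple of $A_2W_3A_3$, giving the stated criterion.

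The main obstacle I anticipate is the bookkeeping in part (b): one must be careful that the four operators $AW$, $WA$, $A^{\otimes,W}$, $A^{\otimes,W}W$, $(AW)^{\otimes}$ live on the correct pair of spaces among $R((AW)^d)\oplus N[((AW)^d)^*]$ and $R((WA)^d)\oplus N[((WA)^d)^*]$, since $W$ and $A$ swap these two decompositions; a single misplaced factor of $W_1$ or $W_1^{-1}$ will make the $(1,2)$-entry comparison come out wrong. It is also worth checking that the ``if and only if'' genuinely needs only the $(1,2)$ block — the $(2,1)$ and $(2,2)$ blocks of all operators involved vanish by construction, and the $(1,1)$ blocks always match, so the whole content of each equivalence is concentrated in the single off-diagonal block, which is what makes the clean conditions $W_2A_3W_3=0$ and $A_2W_3A_3=0$ emerge. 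Once the block algebra is set up correctly, each of (b)(i) and (b)(ii) is a one-line entrywise comparison.
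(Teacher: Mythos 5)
Your proposal is correct and follows essentially the same route as the paper: part (a) via the identity $WA^{\tiny\textcircled{\tiny d},W}=(WA)^{\tiny\textcircled{\tiny d}}$ and the computation showing $A[(WA)^{\otimes}]^2=A^{\otimes,W}$, and part (b) by writing $(AW)^{\otimes}$, $A^{\otimes,W}W$ and $[(AW)^{\otimes}]^2A$ in $2\times2$ block form and comparing the $(1,2)$ entries, whose difference is an invertible operator times $W_2A_3W_3$ (resp.\ $A_2W_3A_3$). The only cosmetic caveat is that in (a) the converse should rest on the displayed formula $A^{\otimes,W}=A[(WA)^{\otimes}]^2$ rather than on ``$W\neq 0$ and uniqueness of the block form,'' but you supply that formula anyway, exactly as the paper does.
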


\begin{proof} (a) (i) $\Rightarrow$ (ii): By \cite[Theorem 2.4]{DM-weighted-core-EP}, $WA^{\tiny\textcircled{\tiny
d},W}=(WA)^{\tiny\textcircled{\tiny d}}$, which implies
$WA^{\otimes,W}=W(A^{\tiny\textcircled{\tiny
d},W}W)^2A=[(WA)^{\tiny\textcircled{\tiny d}}]^2WA=(WA)^\otimes$.

(ii) $\Rightarrow$ (i): We observe that
\begin{eqnarray*}
A[(WA)^{\otimes}]^2&=&A[(WA)^{\tiny\textcircled{\tiny
d}}]^2WA[(WA)^{\tiny\textcircled{\tiny
d}}]^2WA=A[(WA)^{\tiny\textcircled{\tiny d}}]^3WA\\
&=&A(WA^{\tiny\textcircled{\tiny
d},W})^3WA=(A^{\tiny\textcircled{\tiny d},W}W)^2A=A^{\otimes,W}.
\end{eqnarray*}

(b) (i) It follows from the equalities
$$(AW)^{\otimes}=\sbmatrix{cc}
(A_1W_1)^{-1}&(A_1W_1)^{-2}(A_1W_2+A_2W_3)\\0&0\endsbmatrix$$ and
$$A^{\otimes,W}W=\sbmatrix{cc}
(A_1W_1)^{-1}&(W_1A_1W_1)^{-1}W_2+(A_1W_1)^{-2}(A_2+W_1^{-1}W_2A_3)W_3\\0&0\endsbmatrix.$$

(ii) Using (\ref{jed-oper-B-w-weak-group}) and
$$[(AW)^{\otimes}]^2A=\sbmatrix{cc}
(W_1A_1W_1)^{-1}&(A_1W_1)^{-2}(A_2+W_1^{-1}W_2A_3+(A_1W_1)^{-1}A_2W_3A_3)\\0&0\endsbmatrix,$$
we get that (ii) holds.
\end{proof}

Using the weak group inverses of $AW$ and $WA$, we obtain the next
formula for the weighted weak group inverses of $A$.

\begin{theorem} Let $W\in\B(Y,X)\backslash\{0\}$ and $A\in\B(X,Y)^{d,W}$. Then
$$A^{\otimes,W}=(AW)^{\otimes}A(WA)^{\otimes}.$$
\end{theorem}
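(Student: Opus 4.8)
The plan is to recognize the triple product $(AW)^{\otimes}A(WA)^{\otimes}$ as one of the representations of $A^{\otimes,W}$ that is already available, namely the one in Theorem~\ref{te4-w-weak-group}(iv), rather than to grind through the block forms of Lemma~\ref{te0-repres-A-W} directly.

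First I would rewrite the two outer factors. Since $AW\in\B(Y)^{d}$, part (vi) of the corollary following Theorem~\ref{te4-w-weak-group}, applied to the operator $AW$, gives $(AW)^{\otimes}=[(AW)^{2}]^{\tiny\textcircled{\tiny d}}AW$. For the factor $(WA)^{\otimes}$, Theorem~\ref{te5-w-weak-group}(a) yields $(WA)^{\otimes}=WA^{\otimes,W}$, and the identity $AWA^{\otimes,W}=A^{\tiny\textcircled{\tiny d},W}WA$ established inside the proof of Lemma~\ref{le1-w-weak-group}(v) then gives $A(WA)^{\otimes}=A(WA^{\otimes,W})=AWA^{\otimes,W}=A^{\tiny\textcircled{\tiny d},W}WA$. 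Combining these two observations,
$$(AW)^{\otimes}A(WA)^{\otimes}=[(AW)^{2}]^{\tiny\textcircled{\tiny d}}AW\cdot A^{\tiny\textcircled{\tiny d},W}WA=[(AW)^{2}]^{\tiny\textcircled{\tiny d}}AWA^{\tiny\textcircled{\tiny d},W}WA,$$
and the right-hand side is exactly $A^{\otimes,W}$ by Theorem~\ref{te4-w-weak-group}(iv). This would finish the proof.

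The one point that needs care — and where I would slow down — is the bookkeeping of domains and codomains: $(AW)^{\otimes}$ and $(WA)^{\otimes}$ are operators on $R((AW)^{d})\oplus N[((AW)^{d})^{*}]$ and on $R((WA)^{d})\oplus N[((WA)^{d})^{*}]$ respectively, while $A^{\otimes,W}$ maps the second decomposition into the first, so one must check that the three factors compose legitimately and that the composite has the same domain and codomain as $A^{\otimes,W}$; everything else is formal manipulation of identities proved earlier. As an independent verification one can instead substitute the block form (\ref{jed-oper-B-w-weak-group}) of $A^{\otimes,W}$, the block form of $(AW)^{\otimes}$ from Theorem~\ref{te5-w-weak-group}(b)(i), and the block form $(WA)^{\otimes}=\sbmatrix{cc}(W_{1}A_{1})^{-1}&(W_{1}A_{1})^{-2}(W_{1}A_{2}+W_{2}A_{3})\\0&0\endsbmatrix$ obtained from Corollary~\ref{cor1-weak-group} applied to $WA$ (using $WA=\sbmatrix{cc}W_{1}A_{1}&W_{1}A_{2}+W_{2}A_{3}\\0&W_{3}A_{3}\endsbmatrix$), and multiply the three blocks out; the only non-obvious simplifications needed there are the commutation identities $(A_{1}W_{1})^{-1}A_{1}=W_{1}^{-1}$ and $(A_{1}W_{1})^{-1}A_{1}(W_{1}A_{1})^{-1}=(W_{1}A_{1}W_{1})^{-1}$ for the invertible blocks $A_{1}$ and $W_{1}$.
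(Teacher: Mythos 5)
Your proof is correct, but it takes a genuinely different route from the paper's. The paper proves this identity the same way it proves most of its results: it writes $(AW)^{\otimes}$, $A$ and $(WA)^{\otimes}$ in the $2\times 2$ block forms coming from Lemma \ref{te0-repres-A-W}, multiplies the blocks out, and matches the result against the block form (\ref{jed-oper-B-w-weak-group}) of $A^{\otimes,W}$ --- essentially the ``independent verification'' you sketch at the end. Your main argument instead assembles the identity coordinate-free from earlier results: $(AW)^{\otimes}=[(AW)^2]^{\tiny\textcircled{\tiny d}}AW$ from part (vi) of the corollary to Theorem \ref{te4-w-weak-group} applied to $AW\in\B(Y)^d$, $(WA)^{\otimes}=WA^{\otimes,W}$ from Theorem \ref{te5-w-weak-group}(a), the defining equation $AWA^{\otimes,W}=A^{\tiny\textcircled{\tiny d},W}WA$ from Theorem \ref{te1-w-weak-group}, and finally Theorem \ref{te4-w-weak-group}(iv); all of these precede the statement in the paper, the operators compose with the right domains and codomains ($X\to X\to Y\to Y$), and the chain $(AW)^{\otimes}A(WA)^{\otimes}=[(AW)^2]^{\tiny\textcircled{\tiny d}}AW\,(A(WA)^{\otimes})=[(AW)^2]^{\tiny\textcircled{\tiny d}}AWA^{\tiny\textcircled{\tiny d},W}WA=A^{\otimes,W}$ is a legitimate use of associativity. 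What your route buys is transparency about which structural identities actually drive the formula and no dependence on the quasinilpotent/invertible splitting beyond what was already packaged into the cited results; what the paper's route buys is brevity and uniformity with the surrounding proofs, at the cost of another opaque block multiplication. Both are valid.
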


\begin{proof} Let $A$ and $W$ be given by (\ref{jed-oper-A}) and
(\ref{jed-oper-W}), respectively. Then
\begin{eqnarray*}(AW)^{\otimes}A(WA)^{\otimes}&=&\sbmatrix{cc}
W_1^{-1}&(A_1W_1)^{-1}A_2+(A_1W_1)^{-2}(A_1W_2+A_2W_3)A_3\\0&0\endsbmatrix\\&\times&\sbmatrix{cc}
(W_1A_1)^{-1}&(W_1A_1)^{-2}(W_1A_2+W_2A_3)\\0&0\endsbmatrix\\&=&A^{\otimes,W}.\end{eqnarray*}
\end{proof}

We also investigate necessary and sufficient conditions for
$AWA^{\otimes,W}=A^{\otimes,W}WA$ to hold.

\begin{theorem}\label{te8-w-weak-group} Let $W\in\B(Y,X)\backslash\{0\}$ and $A\in\B(X,Y)^{d,W}$. If $A$ and $W$ are represented by {\rm
(\ref{jed-oper-A})} and {\rm (\ref{jed-oper-W})}, respectively,
then the following statements are equivalent:
\begin{itemize}
\item[\rm(i)] $AWA^{\otimes,W}=A^{\otimes,W}WA$;

\item[\rm(ii)] $(W_1A_2+W_2A_3)W_3A_3=0$;

\item[\rm(iii)] $[(WA)^\otimes]^2=[(WA)^2]^\otimes$.
\end{itemize}
In this case, $A^{\otimes,W}=A^{d,W}$.
\end{theorem}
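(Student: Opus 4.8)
The plan is to exploit the $2\times 2$ operator matrix representations from Lemma~\ref{te0-repres-A-W} and Theorem~\ref{te1-w-weak-group}, since all three conditions are equations among operators that, in these coordinates, reduce to statements about the off-diagonal blocks. First I would write down explicitly, in the block form with respect to the decompositions $R((WA)^d)\oplus N[((WA)^d)^*]$ and $R((AW)^d)\oplus N[((AW)^d)^*]$, the operators $A^{\otimes,W}$ (from~(\ref{jed-oper-B-w-weak-group})), $AW$, $WA$, and their products $AWA^{\otimes,W}$ and $A^{\otimes,W}WA$. From the proof of Theorem~\ref{te1-w-weak-group} we already know $AWA^{\otimes,W}=AWB=A^{\tiny\textcircled{\tiny d},W}WA=\sbmatrix{cc} W_1^{-1}&(A_1W_1)^{-1}A_2+(W_1A_1W_1)^{-1}W_2A_3\\0&0\endsbmatrix$, and a direct computation of $A^{\otimes,W}WA$ using~(\ref{jed-oper-B-w-weak-group}) and the block form of $WA$ gives an upper-triangular-type matrix whose $(1,1)$ entry is again $W_1^{-1}$ and whose $(1,2)$ entry differs from that of $AWA^{\otimes,W}$ by a term proportional to $(A_1W_1)^{-2}(A_2+W_1^{-1}W_2A_3)W_3A_3$. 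Comparing the two $(1,2)$ entries, equality holds if and only if $(A_2+W_1^{-1}W_2A_3)W_3A_3=0$, which after multiplying by $W_1$ on the left is exactly $(W_1A_2+W_2A_3)W_3A_3=0$; this establishes (i)$\Leftrightarrow$(ii).

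For (ii)$\Leftrightarrow$(iii), I would use the block form of $WA$ from the proof of Theorem~\ref{te4-w-weak-group}(iii), namely $WA=\sbmatrix{cc} W_1A_1&W_1A_2+W_2A_3\\0&W_3A_3\endsbmatrix$. Its weak group inverse and the weak group inverse of $(WA)^2$ can both be read off from Corollary~\ref{cor4-weak-group} / the formulas in the corollary to Theorem~\ref{te4-w-weak-group}, since for an upper-triangular operator $M=\sbmatrix{cc} M_1&M_2\\0&M_3\endsbmatrix$ with $M_1$ invertible and $M_3$ quasinilpotent one has $M^{\otimes}=\sbmatrix{cc} M_1^{-1}&M_1^{-2}M_2\\0&0\endsbmatrix$. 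Applying this to $WA$ gives $(WA)^{\otimes}$, hence $[(WA)^{\otimes}]^2$, while applying it to $(WA)^2=\sbmatrix{cc}(W_1A_1)^2& W_1A_1(W_1A_2+W_2A_3)+(W_1A_2+W_2A_3)W_3A_3\\0&(W_3A_3)^2\endsbmatrix$ gives $[(WA)^2]^{\otimes}$. Comparing the $(1,2)$ blocks of $[(WA)^{\otimes}]^2$ and $[(WA)^2]^{\otimes}$, the difference is again a nonzero multiple of $(W_1A_2+W_2A_3)W_3A_3$, so these two operators coincide precisely when (ii) holds. (Alternatively one can invoke Theorem~\ref{te5-w-weak-group}(a), which identifies $A^{\otimes,W}$ with $(WA)^{\otimes}$ up to the change of variables $W$, to transfer the equivalence directly.)

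Finally, for the ``in this case'' clause, I would observe that when $(W_1A_2+W_2A_3)W_3A_3=0$ the extra block in the series formula for $U$ in Lemma~\ref{te0-repres-A-W} telescopes: indeed $U=\sum_{n\geq 0}(W_1A_1)^{-(n+2)}(W_1A_2+W_2A_3)(W_3A_3)^n$ collapses to its $n=0$ term $(W_1A_1)^{-2}(W_1A_2+W_2A_3)$ once $(W_1A_2+W_2A_3)W_3A_3=0$, and similarly one checks that the $(1,2)$ block $W_1^{-1}U$ of $A^{d,W}$ in~(\ref{oper-WgD-A}) becomes $(A_1W_1)^{-2}(A_2+W_1^{-1}W_2A_3)$, which is exactly the $(1,2)$ block of $B=A^{\otimes,W}$ in~(\ref{jed-oper-B-w-weak-group}); the $(1,1)$ blocks already agree as $(W_1A_1W_1)^{-1}$, so $A^{\otimes,W}=A^{d,W}$. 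I expect the main obstacle to be bookkeeping: correctly expanding the block products $AWA^{\otimes,W}$, $A^{\otimes,W}WA$, $[(WA)^{\otimes}]^2$ and $[(WA)^2]^{\otimes}$ and isolating in each case the common factor $(W_1A_2+W_2A_3)W_3A_3$, together with checking that the vanishing of this factor really does kill the infinite series in $U$ rather than merely its first correction term — but since $W_3A_3$ is quasinilpotent and appears to a power $\geq 1$ in every term with $n\geq 1$, the hypothesis $(W_1A_2+W_2A_3)W_3A_3=0$ forces every such term to vanish, so this is routine once set up carefully.
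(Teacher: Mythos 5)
Your proposal is correct and follows essentially the same route as the paper: both prove (i)$\Leftrightarrow$(ii) and (ii)$\Leftrightarrow$(iii) by comparing the $(1,2)$ blocks of $AWA^{\otimes,W}$ versus $A^{\otimes,W}WA$ and of $[(WA)^{\otimes}]^2$ versus $[(WA)^2]^{\otimes}$ in the block decomposition of Lemma~\ref{te0-repres-A-W}, isolating the factor $(W_1A_2+W_2A_3)W_3A_3$ in each case, and then deduce $A^{\otimes,W}=A^{d,W}$ from (\ref{oper-WgD-A}). Your explicit observation that the series $U$ collapses to its $n=0$ term is a welcome elaboration of a step the paper only asserts.
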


\begin{proof} (i) $\Leftrightarrow$ (ii): The equalities $$AWA^{\otimes,W}=\sbmatrix{cc}
W_1^{-1}&(A_1W_1)^{-1}A_2+(W_1A_1W_1)^{-1}W_2A_3\\0&0\endsbmatrix$$
and $$A^{\otimes,W}WA=\sbmatrix{cc}
W_1^{-1}&(A_1W_1)^{-1}A_2+(W_1A_1W_1)^{-1}W_2A_3+(A_1W_1)^{-2}(A_2+W_1^{-1}W_2A_3)W_3A_3\\0&0\endsbmatrix$$
give $AWA^{\otimes,W}=A^{\otimes,W}WA$ if and only if
$(A_2+W_1^{-1}W_2A_3)W_3A_3=0$ which is equivalent to
$(W_1A_2+W_2A_3)W_3A_3=0$.

(ii) $\Leftrightarrow$ (iii): Firstly, we observe that
$$[(WA)^{\otimes}]^2=\sbmatrix{cc}
(W_1A_1)^{-2}&(W_1A_1)^{-3}(W_1A_2+W_2A_3)\\0&0\endsbmatrix.$$
Furthermore, $$(WA)^2=\sbmatrix{cc}
(W_1A_1)^{2}&W_1A_1(W_1A_2+W_2A_3)+(W_1A_2+W_2A_3)W_3A_3\\0&(W_3A_3)^2\endsbmatrix$$
is generalized Drazin invertible and, by Corollary
\ref{cor1-weak-group},
$$[(WA)^2]^\otimes=\sbmatrix{cc}
(W_1A_1)^{-2}&(W_1A_1)^{-3}(W_1A_2+W_2A_3)+(W_1A_1)^{-4}(W_1A_2+W_2A_3)W_3A_3\\0&0\endsbmatrix.$$
Thus, $[(WA)^\otimes]^2=[(WA)^2]^\otimes$ is equivalent to
$(W_1A_2+W_2A_3)W_3A_3=0$.

Notice that, by $(W_1A_2+W_2A_3)W_3A_3=0$ and (\ref{oper-WgD-A}),
we obtain $A^{d,W}=A^{\otimes,W}$.
\end{proof}

Applying Theorem \ref{te8-w-weak-group} and Theorem
\ref{te5-w-weak-group}(b)(ii), we get equivalent conditions for
$AA^{\otimes}=A^{\otimes}A$ to be satisfied. We observe that
conditions (i)--(iii) appeared for the weak group inverse of a
square matrix, but the condition (iv) is new even for matrix case.

\begin{corollary} Let $A\in\B(X)^d$. If $A$ is represented by {\rm
(\ref{jed-oper-A123})}, then the following statements are
equivalent:
\begin{itemize}
\item[\rm(i)] $AA^{\otimes}=A^{\otimes}A$;

\item[\rm(ii)]  $A_2A_3=0$;

\item[\rm(iii)] $(A^\otimes)^2=(A^2)^\otimes$;

\item[\rm(iv)] $A^\otimes=(A^\otimes)^2A$.
\end{itemize}
In this case, $A^{\otimes}=A^{d}$.
\end{corollary}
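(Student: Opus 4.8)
The plan is to derive this corollary with no new computation, simply as the specialization $X=Y$, $W=I$ of the two theorems just proved. First I would record what that specialization does to the data of Lemma~\ref{te0-repres-A-W}: when $W=I$ the block form (\ref{jed-oper-W}) of $W$ collapses to the identity, so $W_1=I_{R(A^d)}$, $W_2=0$ and $W_3=I_{N((A^d)^*)}$, while (since $(WA)^d=(AW)^d=A^d$ here) the decomposition (\ref{jed-oper-A}) of $A$ becomes exactly the representation (\ref{jed-oper-A123}) used in Corollary~\ref{cor1-weak-group}. Under these identifications $A^{\otimes,W}=A^{\otimes}$, $(WA)^{\otimes}=(AW)^{\otimes}=A^{\otimes}$, $(WA)^2=(AW)^2=A^2$, and $A^{d,W}=A^d$.

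Next I would apply Theorem~\ref{te8-w-weak-group}. Its condition~(i), $AWA^{\otimes,W}=A^{\otimes,W}WA$, becomes $AA^{\otimes}=A^{\otimes}A$, which is statement~(i); its condition~(ii), $(W_1A_2+W_2A_3)W_3A_3=0$, becomes $A_2A_3=0$, which is statement~(ii); and its condition~(iii), $[(WA)^{\otimes}]^2=[(WA)^2]^{\otimes}$, becomes $(A^{\otimes})^2=(A^2)^{\otimes}$, which is statement~(iii). Hence Theorem~\ref{te8-w-weak-group} already yields (i)~$\Leftrightarrow$~(ii)~$\Leftrightarrow$~(iii), together with the concluding assertion that in this case $A^{\otimes,W}=A^{d,W}$, i.e. $A^{\otimes}=A^d$.

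It then remains only to fold in statement~(iv), which comes from Theorem~\ref{te5-w-weak-group}(b)(ii): that result says $A^{\otimes,W}=[(AW)^{\otimes}]^2A$ if and only if $A_2W_3A_3=0$. With $W=I$ the left-hand equality reads $A^{\otimes}=(A^{\otimes})^2A$, which is statement~(iv), and the right-hand condition reads $A_2A_3=0$, which is statement~(ii); thus (ii)~$\Leftrightarrow$~(iv), and combining this with the previous paragraph closes the cycle. The one point I would state explicitly rather than leave implicit is the consistency of the two specializations: both Theorem~\ref{te8-w-weak-group} and Theorem~\ref{te5-w-weak-group}(b) use the same representations (\ref{jed-oper-A})--(\ref{jed-oper-W}) of $A$ and $W$, so the operator blocks $A_2,A_3$ carry the same meaning in both arguments, and once that is noted the corollary is immediate; I do not expect any genuine obstacle.
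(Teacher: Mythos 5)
Your proposal is correct and matches the paper's own (implicit) argument exactly: the paper derives this corollary by specializing Theorem \ref{te8-w-weak-group} (giving (i)~$\Leftrightarrow$~(ii)~$\Leftrightarrow$~(iii) and $A^{\otimes}=A^{d}$) and Theorem \ref{te5-w-weak-group}(b)(ii) (giving (ii)~$\Leftrightarrow$~(iv)) to the case $X=Y$, $W=I$, with $W_1=I$, $W_2=0$, $W_3=I$ as you describe. No discrepancies to report.
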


Similarly as Theorem \ref{te8-w-weak-group}, we verify the next
result.

\begin{theorem} Let $W\in\B(Y,X)\backslash\{0\}$ and $A\in\B(X,Y)^{d,W}$. If $A$ and $W$ are represented by {\rm
(\ref{jed-oper-A})} and {\rm (\ref{jed-oper-W})}, respectively,
then the following statements are equivalent:
\begin{itemize}
\item[\rm(i)] $[(AW)^\otimes]^2=[(AW)^2]^\otimes$;

\item[\rm(ii)] $(A_1W_2+A_2W_3)A_3W_3=0$.
\end{itemize}
\end{theorem}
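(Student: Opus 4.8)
The plan is to mimic the proof of Theorem~\ref{te8-w-weak-group} essentially verbatim, but with $AW$ in place of $WA$: everything reduces to writing out the relevant $2\times2$ operator matrices and comparing them entrywise, the equivalence being forced by a single off-diagonal block.

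First I would record the block form of $AW$. Multiplying the representations {\rm (\ref{jed-oper-A})} and {\rm (\ref{jed-oper-W})} gives
$$AW=\sbmatrix{cc} A_1W_1&A_1W_2+A_2W_3\\0&A_3W_3\endsbmatrix:\sbmatrix{c} R((AW)^d)\\N[((AW)^d)^*]\endsbmatrix\rightarrow\sbmatrix{c} R((AW)^d)\\N[((AW)^d)^*]\endsbmatrix,$$
where $A_1W_1$ is invertible and $A_3W_3$ is quasinilpotent; this is precisely the block shape to which Corollary~\ref{cor1-weak-group} applies. Hence
$$(AW)^{\otimes}=\sbmatrix{cc} (A_1W_1)^{-1}&(A_1W_1)^{-2}(A_1W_2+A_2W_3)\\0&0\endsbmatrix$$
(the formula already displayed in Theorem~\ref{te5-w-weak-group}(b)(i)), so that
$$[(AW)^{\otimes}]^2=\sbmatrix{cc} (A_1W_1)^{-2}&(A_1W_1)^{-3}(A_1W_2+A_2W_3)\\0&0\endsbmatrix.$$

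Next I would compute
$$(AW)^2=\sbmatrix{cc} (A_1W_1)^2&A_1W_1(A_1W_2+A_2W_3)+(A_1W_2+A_2W_3)A_3W_3\\0&(A_3W_3)^2\endsbmatrix,$$
which is again generalized Drazin invertible with invertible upper-left block $(A_1W_1)^2$ and quasinilpotent lower-right block $(A_3W_3)^2$, so Corollary~\ref{cor1-weak-group} applies once more and yields
$$[(AW)^2]^{\otimes}=\sbmatrix{cc} (A_1W_1)^{-2}&(A_1W_1)^{-3}(A_1W_2+A_2W_3)+(A_1W_1)^{-4}(A_1W_2+A_2W_3)A_3W_3\\0&0\endsbmatrix.$$
Comparing the two operator matrices entrywise, they agree everywhere except possibly in the $(1,2)$-entry, where the difference equals $(A_1W_1)^{-4}(A_1W_2+A_2W_3)A_3W_3$. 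Since $A_1W_1$ is invertible, this difference vanishes if and only if $(A_1W_2+A_2W_3)A_3W_3=0$, which is exactly the equivalence of (i) and (ii).

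There is no genuine obstacle; the only points that need a line of care are checking that $AW$ has precisely the block-triangular shape required by Corollary~\ref{cor1-weak-group} and that $(AW)^2$ still does --- the latter because the square of an invertible operator is invertible and the square of a quasinilpotent operator is quasinilpotent --- after which the statement drops out of the entrywise comparison above.
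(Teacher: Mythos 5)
Your proof is correct and is exactly the argument the paper intends: it states only that the result is verified ``similarly as Theorem~\ref{te8-w-weak-group}'', and your computation is the precise $AW$-analogue of that proof, with the block forms of $(AW)^{\otimes}$, $[(AW)^{\otimes}]^2$ and $[(AW)^2]^{\otimes}$ all checking out and the equivalence reducing to the vanishing of the $(1,2)$-entry $(A_1W_1)^{-4}(A_1W_2+A_2W_3)A_3W_3$. The only point worth a word more than you give it is that applying Corollary~\ref{cor1-weak-group} to $(AW)^2$ uses $R([(AW)^2]^d)=R((AW)^d)$ and $N[([(AW)^2]^d)^*]=N[((AW)^d)^*]$ so that the same orthogonal decomposition serves; this is standard and is used tacitly in the paper's own proof of Theorem~\ref{te8-w-weak-group} as well.
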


\section{Weighted weak group relations}

Various types of partial orders and pre-orders were defined based
on various types of generalized inverses \cite{MBM,DM-GI-monog}.
We firstly introduce a new binary relation using the weak group
inverse.

\begin{definition} Let $B\in\B(X)$ and $A\in\B(X)^d$. Then we say that $A$ is
below $B$ under the weak group relation (denoted by
$A\leq^{\otimes}B$) if
$$AA^{\otimes}=BA^{\otimes}\quad{\rm and}\quad
A^{\otimes}A=A^{\otimes}B.$$
\end{definition}

Remark that the relation "$\leq^{\otimes}$" is not a partial
order, because it is not antisymmetric. Indeed, if $A,B\in\B(X)$
are quasinilpotent and $A\neq B$, then $A^d=0$ and $B^d=0$ imply
$A^{\otimes}=0$ and $B^{\otimes}=0$. Hence, $A\leq^{\otimes}B$ and
$B\leq^{\otimes}A$, but $A\neq B$.

By the following example, we see that the relation
"$\leq^{\otimes}$" is not transitive and so it is not a pre-order.

{\bf Example 3.1.} Consider a $3\times 3$ block matrices
$$A=\sbmatrix{ccc} 1&1&1\\0&0&1\\0&0&0
\endsbmatrix,\quad B=\sbmatrix{ccc} 1&1&0\\0&0&2\\0&0&0
\endsbmatrix\quad{\rm and}\quad C=\sbmatrix{ccc} 1&0&1\\0&1&1\\0&1&1
\endsbmatrix.$$ Then $$A^\otimes=\sbmatrix{ccc} 1&1&1\\0&0&0\\0&0&0
\endsbmatrix\quad{\rm and}\quad B^\otimes=\sbmatrix{ccc} 1&1&0\\0&0&0\\0&0&0
\endsbmatrix.$$ The equalities $AA^\otimes=BA^\otimes$, $A^\otimes A=A^\otimes B$,
$BB^\otimes=CB^\otimes$ and $B^\otimes B=B^\otimes C$ give
$A\leq^{\otimes}B$ and $B\leq^{\otimes}C$. Because $A^\otimes
A\neq A^\otimes C$, the relation $A\leq^{\otimes}C$ is not
satisfied and thus "$\leq^{\otimes}$" is not transitive.

\begin{lemma}\label{le3-w-weak-group} Let $B\in\B(X)$ and $A\in\B(X)^d$. Then:
\begin{itemize}
\item [\rm (i)] $AA^{\otimes}=BA^{\otimes}$ $\Leftrightarrow$
$A^{\tiny\textcircled{\tiny d}}A=B(A^{\tiny\textcircled{\tiny
d}})^2A$ $\Leftrightarrow$ $A^{\tiny\textcircled{\tiny
d}}=B(A^{\tiny\textcircled{\tiny d}})^2$ $\Leftrightarrow$
$A^d=BA^{\tiny\textcircled{\tiny d}}A^d$ $\Leftrightarrow$
$AA^d=BA^d$;

\item [\rm (ii)] $A^{\otimes}A=A^{\otimes}B$ $\Leftrightarrow$
$A^{\tiny\textcircled{\tiny d}}A^2=A^{\tiny\textcircled{\tiny
d}}AB$.
\end{itemize}
\end{lemma}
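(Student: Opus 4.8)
The plan is to treat each claimed equivalence as a short cancellation argument, after first reducing the operators $AA^{\otimes}$, $BA^{\otimes}$, $A^{\otimes}A$, $A^{\otimes}B$ to convenient products. The tools are the $W=I$ specializations of facts already in the excerpt: $A^{\otimes}=(A^{\tiny\textcircled{\tiny d}})^{2}A$ (definition); $A(A^{\tiny\textcircled{\tiny d}})^{2}=A^{\tiny\textcircled{\tiny d}}$ (established inside the proof of Lemma~\ref{le1-w-weak-group}); $A^{d}=A^{\tiny\textcircled{\tiny d}}AA^{d}$ and $A^{\tiny\textcircled{\tiny d}}=A^{d}AA^{\tiny\textcircled{\tiny d}}$ (the $W=I$ case of the identities invoked in the proof of Theorem~\ref{te3-w-weak-group}); and $A^{d}AA^{d}=A^{d}$, $AA^{d}=A^{d}A$ for the generalized Drazin inverse. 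From these one derives, in one line each, the three absorption identities that drive the backward implications: $A^{\tiny\textcircled{\tiny d}}AA^{\tiny\textcircled{\tiny d}}=A^{\tiny\textcircled{\tiny d}}A(A^{d}AA^{\tiny\textcircled{\tiny d}})=(A^{\tiny\textcircled{\tiny d}}AA^{d})AA^{\tiny\textcircled{\tiny d}}=A^{d}AA^{\tiny\textcircled{\tiny d}}=A^{\tiny\textcircled{\tiny d}}$ (so $A^{\tiny\textcircled{\tiny d}}$ is an outer inverse of $A$); $A^{\tiny\textcircled{\tiny d}}A^{d}=A^{\tiny\textcircled{\tiny d}}(AA^{d}A^{d})=(A^{\tiny\textcircled{\tiny d}}AA^{d})A^{d}=(A^{d})^{2}$; and $A(A^{d})^{2}=(A^{d})^{2}A=A^{d}$.

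For part (i): since $AA^{\otimes}=A(A^{\tiny\textcircled{\tiny d}})^{2}A=A^{\tiny\textcircled{\tiny d}}A$ and $BA^{\otimes}=B(A^{\tiny\textcircled{\tiny d}})^{2}A$, the first equivalence is immediate. For $A^{\tiny\textcircled{\tiny d}}A=B(A^{\tiny\textcircled{\tiny d}})^{2}A\Leftrightarrow A^{\tiny\textcircled{\tiny d}}=B(A^{\tiny\textcircled{\tiny d}})^{2}$, one direction is right-multiplication by $A$; for the other, right-multiply by $A^{\tiny\textcircled{\tiny d}}$ and use $A^{\tiny\textcircled{\tiny d}}AA^{\tiny\textcircled{\tiny d}}=A^{\tiny\textcircled{\tiny d}}$ on the left-hand side and $(A^{\tiny\textcircled{\tiny d}})^{2}AA^{\tiny\textcircled{\tiny d}}=A^{\tiny\textcircled{\tiny d}}(A^{\tiny\textcircled{\tiny d}}AA^{\tiny\textcircled{\tiny d}})=(A^{\tiny\textcircled{\tiny d}})^{2}$ on the right-hand side. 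For $A^{\tiny\textcircled{\tiny d}}=B(A^{\tiny\textcircled{\tiny d}})^{2}\Leftrightarrow A^{d}=BA^{\tiny\textcircled{\tiny d}}A^{d}$: forward, apply $A^{d}=A^{\tiny\textcircled{\tiny d}}AA^{d}$ and $(A^{\tiny\textcircled{\tiny d}})^{2}AA^{d}=A^{\tiny\textcircled{\tiny d}}(A^{\tiny\textcircled{\tiny d}}AA^{d})=A^{\tiny\textcircled{\tiny d}}A^{d}$; backward, apply $A^{\tiny\textcircled{\tiny d}}=A^{d}AA^{\tiny\textcircled{\tiny d}}$, so $A^{\tiny\textcircled{\tiny d}}=(BA^{\tiny\textcircled{\tiny d}}A^{d})AA^{\tiny\textcircled{\tiny d}}=BA^{\tiny\textcircled{\tiny d}}(A^{d}AA^{\tiny\textcircled{\tiny d}})=B(A^{\tiny\textcircled{\tiny d}})^{2}$. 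Finally, $A^{\tiny\textcircled{\tiny d}}A^{d}=(A^{d})^{2}$ rewrites $A^{d}=BA^{\tiny\textcircled{\tiny d}}A^{d}$ as $A^{d}=B(A^{d})^{2}$; right-multiplying by $A$ and using $(A^{d})^{2}A=A^{d}$ together with $AA^{d}=A^{d}A$ gives $AA^{d}=BA^{d}$, while conversely $AA^{d}=BA^{d}$ combined with $A^{d}=AA^{d}A^{d}$ gives $A^{d}=(BA^{d})A^{d}=B(A^{d})^{2}=BA^{\tiny\textcircled{\tiny d}}A^{d}$.

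Part (ii) is quicker. Here $A^{\otimes}A=(A^{\tiny\textcircled{\tiny d}})^{2}A^{2}$ and $A^{\otimes}B=(A^{\tiny\textcircled{\tiny d}})^{2}AB$, so $A^{\otimes}A=A^{\otimes}B$ reads $(A^{\tiny\textcircled{\tiny d}})^{2}A^{2}=(A^{\tiny\textcircled{\tiny d}})^{2}AB$. Left-multiplying this by $A$ and using $A(A^{\tiny\textcircled{\tiny d}})^{2}=A^{\tiny\textcircled{\tiny d}}$ yields $A^{\tiny\textcircled{\tiny d}}A^{2}=A^{\tiny\textcircled{\tiny d}}AB$; left-multiplying the latter by $A^{\tiny\textcircled{\tiny d}}$ returns $(A^{\tiny\textcircled{\tiny d}})^{2}A^{2}=(A^{\tiny\textcircled{\tiny d}})^{2}AB$, which closes the equivalence.

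I do not expect a substantive obstacle: the only care needed is to assemble the correct absorption identities in advance and, for each backward implication, to choose the multiplier (either $A$, $A^{\tiny\textcircled{\tiny d}}$, or $A^{d}$) that produces a single-term cancellation. As a fallback that avoids even this bookkeeping, the whole lemma can be checked by direct computation: in $X=R(A^{d})\oplus N((A^{d})^{*})$ write $B$ as a $2\times2$ operator matrix, read off $A$, $A^{d}$, $A^{\tiny\textcircled{\tiny d}}$ and $A^{\otimes}$ from Lemma~\ref{te0-repres-A-W} (with $W=I$) and Corollary~\ref{cor1-weak-group}, and observe that every condition in (i) is equivalent to the $(1,1)$- and $(2,1)$-entries of $B$ being $A_{1}$ and $0$, and similarly for (ii); this route, however, has to carry the series expansion of $A^{d}$ through the computation, which the algebraic argument above sidesteps.
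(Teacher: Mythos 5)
Your argument is correct. Note, however, that the paper states Lemma \ref{le3-w-weak-group} without any proof at all, so there is no authorial argument to compare against; your write-up fills a gap the authors left as routine. The chain of cancellations checks out: the identities you lean on are all legitimately available from the surrounding text --- $A(A^{\tiny\textcircled{\tiny d}})^{2}=A^{\tiny\textcircled{\tiny d}}$ is the $W=I$ case of the identity established inside the proof of Lemma \ref{le1-w-weak-group}(v), and $A^{d}=A^{\tiny\textcircled{\tiny d}}AA^{d}$, $A^{\tiny\textcircled{\tiny d}}=A^{d}AA^{\tiny\textcircled{\tiny d}}$ specialize the identities invoked in the proof of Theorem \ref{te3-w-weak-group}(iii)$\Leftrightarrow$(iv) --- and each forward/backward implication is closed by multiplying with the right one of $A$, $A^{\tiny\textcircled{\tiny d}}$, $A^{d}$. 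The only cosmetic remark is that the authors would most likely have verified this by the block-matrix route you mention as a fallback (it is the technique used everywhere else in the paper), but your coordinate-free version is cleaner precisely because it avoids dragging the series expansion of $A^{d}$ through the computation.
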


We can get characterizations of the weak group relation combining
conditions of Lemma \ref{le3-w-weak-group} from parts (i) and
(ii).

For operators between two Hilbert spaces, we consider a weighted
operator and define the following binary relations.

\begin{definition} Let $A,B\in\B(X,Y)$ and
$W\in\B(Y,X)\backslash\{0\}$. If $A$ is $W${\it g}-Drazin
invertible, then we say that
\begin{itemize}
\item[{\rm (i)}] $A\leq^{\otimes,W,r}B$ if $AW\leq^{\otimes}BW$,

\item[{\rm (ii)}] $A\leq^{\otimes,W,l}B$ if $WA\leq^{\otimes} WB$,

\item[{\rm (iii)}] $A\leq^{\otimes,W}B$ if $A\leq^{\otimes,W,r}B$
and $A\leq ^{\otimes,W,l}B$,
\end{itemize} where $\leq^{\otimes}$ is adequately considered on $\B(X)$ or $\B(Y)$.
\end{definition}

Several characterizations of the relation $\leq^{\otimes,W,r}$ are
presented now.

\begin{theorem}\label{te9-w-weak-group} Let $W\in\B(Y,X)\backslash\{0\}$,
$A\in\B(X,Y)^{d,W}$ and $B\in\B(X,Y)$. Then the following
statements are equivalent:
\begin{itemize}
\item[{\rm (i)}] $A\leq^{\otimes,W,r}B$;

\item[{\rm (ii)}] $AW(AW)^{\otimes}=BW(AW)^{\otimes}$ and
$(AW)^{\otimes}AW=(AW)^{\otimes}BW$;

\item[{\rm (iii)}] the following matrix representations with
respect to the orthogonal sums $X=R((WA)^d)\oplus N[((WA)^d)^*]$
and $Y=R((AW)^d)\oplus N[((AW)^d)^*]$ hold
$$A=\sbmatrix{cc} A_1&A_2\\0&A_3\endsbmatrix, \ W=\sbmatrix{cc} W_1&W_2\\0&W_3\endsbmatrix,
\ B=\sbmatrix{cc} A_1&B_2\\0&B_3\endsbmatrix,$$ where
$A_1\in\B(R((WA)^d),R((AW)^d))^{-1}$,
$W_1\in\B(R((AW)^d),R((WA)^d))^{-1}$,
$(A_2-B_2)W_3+(A_1W_1)^{-1}(A_1W_2+A_2W_3)(A_3-B_3)W_3=0$,
$A_3W_3\in\B(N[((AW)^d)^*])^{qnil}$ and
$W_3A_3\in\B(N[((WA)^d)^*])^{qnil}$.\end{itemize}
\end{theorem}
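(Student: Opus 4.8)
The plan is to prove the three-way equivalence by reducing everything to the already-established matrix-form characterizations. Since $A \leq^{\otimes,W,r} B$ means by definition $AW \leq^{\otimes} BW$, and the latter unwinds (via the definition of $\leq^{\otimes}$ on $\B(Y)$) to exactly $AW(AW)^{\otimes} = BW(AW)^{\otimes}$ and $(AW)^{\otimes}AW = (AW)^{\otimes}BW$, the equivalence (i)$\Leftrightarrow$(ii) is essentially immediate from the definitions. The real content is (ii)$\Leftrightarrow$(iii), where I would bring in the operator matrix representations of Lemma \ref{te0-repres-A-W} together with the formula for $(AW)^{\otimes}$ coming from Corollary \ref{cor1-weak-group} applied to the generalized Drazin invertible operator $AW$.

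First I would fix the representations: since $A$ is Wg-Drazin invertible, write $A$ and $W$ as in (\ref{jed-oper-A}) and (\ref{jed-oper-W}), so that $AW = \sbmatrix{cc} A_1W_1 & A_1W_2+A_2W_3 \\ 0 & A_3W_3 \endsbmatrix$ acting on $R((AW)^d) \oplus N[((AW)^d)^*]$, with $A_1W_1$ invertible and $A_3W_3$ quasinilpotent. Applying Corollary \ref{cor1-weak-group} (or equivalently part of Theorem \ref{te4-w-weak-group}/the computation in Theorem \ref{te5-w-weak-group}(b)) gives
$$(AW)^{\otimes} = \sbmatrix{cc} (A_1W_1)^{-1} & (A_1W_1)^{-2}(A_1W_2+A_2W_3) \\ 0 & 0 \endsbmatrix.$$
Then $AW(AW)^{\otimes}$ and $(AW)^{\otimes}AW$ are explicit $2\times 2$ block matrices, the first being the projector $\sbmatrix{cc} I & * \\ 0 & 0 \endsbmatrix$ and the second $\sbmatrix{cc} I & 0 \\ 0 & 0 \endsbmatrix$. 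To make sense of the right-hand sides $BW(AW)^{\otimes}$ and $(AW)^{\otimes}BW$ I would argue that $A \leq^{\otimes,W,r} B$ forces $B$ to have the block form $\sbmatrix{cc} A_1 & B_2 \\ 0 & B_3 \endsbmatrix$ with the \emph{same} $(1,1)$-entry $A_1$: indeed, $AW(AW)^{\otimes} = BW(AW)^{\otimes}$ evaluated on $R((AW)^d)$ pins down the first column of $BW$, hence (since $W_1$ is invertible) the first column of $B$; analogously $(AW)^{\otimes}AW = (AW)^{\otimes}BW$ constrains the first row. A short computation shows the first column of $B$ being $\sbmatrix{c} A_1 \\ 0 \endsbmatrix$ and conversely any $B$ of that block shape automatically satisfies the first of the two equations in (ii) on its first block-column.

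With $B = \sbmatrix{cc} A_1 & B_2 \\ 0 & B_3 \endsbmatrix$ in hand, both conditions in (ii) reduce to a single scalar-operator equation in the $(1,2)$-block. Computing $BW(AW)^{\otimes}$ and comparing its $(1,2)$-entry with that of $AW(AW)^{\otimes}$, and doing the same for $(AW)^{\otimes}BW$ versus $(AW)^{\otimes}AW$, I expect both to collapse to exactly
$$(A_2-B_2)W_3 + (A_1W_1)^{-1}(A_1W_2+A_2W_3)(A_3-B_3)W_3 = 0,$$
which is the condition stated in (iii). Thus (ii)$\Leftrightarrow$(iii). The main obstacle is bookkeeping: one must be careful that $BW(AW)^{\otimes}$ genuinely only involves $B$ through the combination that appears, and that the two separate equalities in (ii) do not impose \emph{independent} constraints beyond the shared block-form of $B$ and the single displayed relation — verifying that the ``row'' condition contributes nothing new once the block form is known is the step where a sign or an index could easily go wrong. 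Once that is checked, assembling (i)$\Rightarrow$(iii)$\Rightarrow$(ii)$\Rightarrow$(i) is routine.
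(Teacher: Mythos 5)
Your proposal is correct and follows essentially the same route as the paper: (i)$\Leftrightarrow$(ii) directly from the definition, then writing $B$ in general block form relative to the decompositions of Lemma \ref{te0-repres-A-W}, using the block formula for $(AW)^{\otimes}$, and comparing entries so that the first equation in (ii) forces $B_1=A_1$, $B_4=0$ while the second yields the displayed $(1,2)$-block relation. The only minor imprecision is your remark that \emph{both} equations collapse to that single relation; once the block form of $B$ is fixed, the first equation is satisfied identically and only the second contributes the constraint, exactly as in the paper's computation.
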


\begin{proof} (i) $\Leftrightarrow$ (ii): By the definition of the relation $\leq^{\otimes,W,r}$, this is clear.

(ii) $\Rightarrow$ (iii): Let $A$ and $W$ be given by
(\ref{jed-oper-A}) and (\ref{jed-oper-W}), respectively. Suppose
that
$$B=\sbmatrix{cc}
B_1&B_2\\B_4&B_3\endsbmatrix:\sbmatrix{c}
R((WA)^d)\\N[((WA)^d)^*]\endsbmatrix\rightarrow\sbmatrix{c}
R((AW)^d)\\N[((AW)^d)^*]\endsbmatrix.$$ Then $$BW=\sbmatrix{cc}
B_1W_1&B_1W_2+B_2W_3\\B_4W_1&B_4W_2+B_3W_3\endsbmatrix,$$
$$AW(AW)^{\otimes}=\sbmatrix{cc}I&(A_1W_1)^{-1}(A_1W_2+A_2W_3)\\0&0\endsbmatrix$$
and $$BW(AW)^{\otimes}=\sbmatrix{cc}
B_1W_1(A_1W_1)^{-1}&B_1W_1(A_1W_1)^{-2}(A_1W_2+A_2W_3)\\B_4W_1(A_1W_1)^{-1}&B_4W_1(A_1W_1)^{-2}(A_1W_2+A_2W_3)\endsbmatrix.$$
Therefore, $AW(AW)^{\otimes}=BW(AW)^{\otimes}$ is equivalent to
$B_1=A_1$ and $B_4=0$. Further, the equalities
$$(AW)^{\otimes}AW=
\sbmatrix{cc}I&(A_1W_1)^{-1}(A_1W_2+A_2W_3)+(A_1W_1)^{-2}(A_1W_2+A_2W_3)A_3W_3\\0&0\endsbmatrix,$$
$$(AW)^{\otimes}BW=\sbmatrix{cc}
I&(A_1W_1)^{-1}(A_1W_2+B_2W_3)+(A_1W_1)^{-2}(A_1W_2+A_2W_3)B_3W_3\\0&0\endsbmatrix$$
and $(AW)^{\otimes}AW=(AW)^{\otimes}BW$ give
$(A_2-B_2)W_3+(A_1W_1)^{-1}(A_1W_2+A_2W_3)(A_3-B_3)W_3=0$.

(iii) $\Rightarrow$ (ii): This part can be checked by direct
computations.
\end{proof}

In an analogy way, we characterize of the relation $\leq
^{\tiny\textcircled{\tiny d},W,l}$.

\begin{theorem}\label{te10-w-weak-group} Let $W\in\B(Y,X)\backslash\{0\}$,
$A\in\B(X,Y)^{d,W}$ and $B\in\B(X,Y)$. Then the following
statements are equivalent:
\begin{itemize}
\item[{\rm (i)}] $A\leq^{\otimes,W,l}B$;

\item[{\rm (ii)}] $WA(WA)^{\otimes}=WB(WA)^{\otimes}$ and
$(WA)^{\otimes}WA=(WA)^{\otimes}WB$;

\item[{\rm (iii)}] $WAWA^{\otimes,W}=WBWA^{\otimes,W}$ and
$WA^{\otimes,W}WA=WA^{\otimes,W}WB$;

\item[{\rm (iv)}] the following matrix representations with
respect to the orthogonal sums $X=R((WA)^d)\oplus N[((WA)^d)^*]$
and $Y=R((AW)^d)\oplus N[((AW)^d)^*]$ hold
$$A=\sbmatrix{cc} A_1&A_2\\0&A_3\endsbmatrix, \ W=\sbmatrix{cc} W_1&W_2\\0&W_3\endsbmatrix,
\ B=\sbmatrix{cc} A_1-W_1^{-1}W_2B_4&B_2\\B_4&B_3\endsbmatrix,$$
where $A_1\in\B(R((WA)^d),R((AW)^d))^{-1}$ and
$W_1\in\B(R((AW)^d),R((WA)^d))^{-1}$, $W_3B_4=0$,
$B_2=A_2+W_1^{-1}W_2(A_3-B_3)+(W_1A_1W_1)^{-1}(W_1A_2+W_2A_3)W_3(A_3-B_3)$,
$A_3W_3\in\B(N[((AW)^d)^*])^{qnil}$ and
$W_3A_3\in\B(N[((WA)^d)^*])^{qnil}$.\end{itemize}
\end{theorem}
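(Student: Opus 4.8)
The plan is to parallel the proof of Theorem \ref{te9-w-weak-group}, but working on the left-hand side, i.e., with the operator $WA\in\B(X)^d$ instead of $AW$. The equivalence \text{(i)} $\Leftrightarrow$ \text{(ii)} is immediate from the definition of $\leq^{\otimes,W,l}$, namely $A\leq^{\otimes,W,l}B$ means $WA\leq^{\otimes}WB$, and the latter unpacks to the two equalities $WA(WA)^{\otimes}=WB(WA)^{\otimes}$ and $(WA)^{\otimes}WA=(WA)^{\otimes}WB$. For the equivalence \text{(ii)} $\Leftrightarrow$ \text{(iii)}, I would use Theorem \ref{te5-w-weak-group}(a), which gives $(WA)^{\otimes}=WA^{\otimes,W}$ (equivalently $W A^{\otimes,W}=(WA)^{\otimes}$); substituting this identity into each of the four expressions appearing in \text{(ii)} and \text{(iii)} turns one pair of equations into the other, since $WA(WA)^{\otimes}=WA\cdot WA^{\otimes,W}=WAWA^{\otimes,W}$ and similarly for the remaining three terms. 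Thus \text{(ii)} and \text{(iii)} are literally the same two equations rewritten.

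The substantive part is \text{(ii)} $\Leftrightarrow$ \text{(iv)}. For \text{(ii)} $\Rightarrow$ \text{(iv)}, I would represent $A$ and $W$ as in \eqref{jed-oper-A} and \eqref{jed-oper-W} and write $B$ as a full $2\times 2$ block matrix $B=\sbmatrix{cc}B_1&B_2\\B_4&B_3\endsbmatrix$ with respect to the same orthogonal decompositions. Then I would compute $WA$, $WB$, $(WA)^{\otimes}$ (the latter from Lemma \ref{te0-repres-A-W} and Theorem \ref{te1-w-weak-group}, analogously to the formula $(WA)^{\otimes}=\sbmatrix{cc}(W_1A_1)^{-1}&(W_1A_1)^{-2}(W_1A_2+W_2A_3)\\0&0\endsbmatrix$ already used in the proof of Theorem \ref{te8-w-weak-group}), and then the four products $WA(WA)^{\otimes}$, $WB(WA)^{\otimes}$, $(WA)^{\otimes}WA$, $(WA)^{\otimes}WB$. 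Comparing the $(1,1)$ and $(2,1)$ blocks of $WA(WA)^{\otimes}=WB(WA)^{\otimes}$ should force, after multiplying out and using invertibility of $W_1$ and $A_1W_1$, exactly the relations $W_1B_1+W_2B_4=W_1A_1$ (equivalently $B_1=A_1-W_1^{-1}W_2B_4$) together with $W_3B_4=0$ — note the extra $W$ on the left means the combination $WB$ mixes $B_1$ and $B_4$ through $W_1,W_2$, which is the source of the twisted entry $A_1-W_1^{-1}W_2B_4$ in the statement. Then the equation $(WA)^{\otimes}WA=(WA)^{\otimes}WB$, comparing the surviving $(1,2)$ block, should yield the prescribed formula for $B_2$ in terms of $A_2$, $A_3-B_3$, and the weight blocks. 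The reverse implication \text{(iv)} $\Rightarrow$ \text{(ii)} is then a direct (if tedious) verification that those block constraints make the two matrix equations hold.

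The main obstacle I anticipate is purely computational: correctly bookkeeping the block products in the presence of the nonzero off-diagonal weight block $W_2$, so that the left multiplication by $W$ is handled consistently and the constraint emerges in the exact twisted form $B_1=A_1-W_1^{-1}W_2B_4$ with the side condition $W_3B_4=0$, rather than some unsimplified equivalent. In particular one must be careful that $W_3B_4=0$ is extracted from the $(2,1)$ block (where $W_3A_3$ is only quasinilpotent, not invertible, so one cannot simply cancel it) — here the correct reading is that the $(2,1)$ entry of $WA(WA)^{\otimes}$ vanishes identically, so the $(2,1)$ entry of $WB(WA)^{\otimes}$, namely $(W_3B_4)W_1(A_1W_1)^{-1}$, must vanish, and right-invertibility of $W_1(A_1W_1)^{-1}$ gives $W_3B_4=0$. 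Once the bookkeeping is organized this way, the remaining equalities reduce to the single displayed identity for $B_2$, exactly as in the statement, and the converse follows by substitution. I would also remark, as the authors do after Theorem \ref{te9-w-weak-group}, that part \text{(iii)} $\Leftrightarrow$ \text{(iv)} can alternatively be checked directly once \text{(ii)} $\Leftrightarrow$ \text{(iii)} is in hand.
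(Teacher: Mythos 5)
Your proposal is correct and follows exactly the route the paper intends: the paper omits the proof of this theorem, remarking only that it is obtained ``in an analogy way'' to Theorem~\ref{te9-w-weak-group}, and your plan --- the definition for (i)$\Leftrightarrow$(ii), the identity $(WA)^{\otimes}=WA^{\otimes,W}$ from Theorem~\ref{te5-w-weak-group}(a) for (ii)$\Leftrightarrow$(iii), and the block computation of $WA(WA)^{\otimes}$, $WB(WA)^{\otimes}$, $(WA)^{\otimes}WA$, $(WA)^{\otimes}WB$ for (ii)$\Leftrightarrow$(iv) --- is precisely that analogue and does yield the stated constraints $B_1=A_1-W_1^{-1}W_2B_4$, $W_3B_4=0$ and the displayed formula for $B_2$. (One cosmetic slip: the $(2,1)$ entry of $WB(WA)^{\otimes}$ is $W_3B_4(W_1A_1)^{-1}$ rather than $(W_3B_4)W_1(A_1W_1)^{-1}$, which does not affect the conclusion $W_3B_4=0$.)
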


Combining Theorem \ref{te9-w-weak-group} and Theorem
\ref{te10-w-weak-group}, we get the next results.

\begin{corollary} Let $W\in\B(Y,X)\backslash\{0\}$,
$A\in\B(X,Y)^{d,W}$ and $B\in\B(X,Y)$. Then the following
statements are equivalent:
\begin{itemize}
\item[{\rm (i)}] $A\leq^{\otimes,W}B$;

\item[{\rm (ii)}] the following matrix representations with
respect to the orthogonal sums $X=R((WA)^d)\oplus N[((WA)^d)^*]$
and $Y=R((AW)^d)\oplus N[((AW)^d)^*]$ hold
$$A=\sbmatrix{cc} A_1&A_2\\0&A_3\endsbmatrix, \ W=\sbmatrix{cc} W_1&W_2\\0&W_3\endsbmatrix,
\ B=\sbmatrix{cc} A_1&B_2\\0&B_3\endsbmatrix,$$ where
$A_1\in\B(R((WA)^d),R((AW)^d))^{-1}$,
$W_1\in\B(R((AW)^d),R((WA)^d))^{-1}$,
$B_2=A_2+W_1^{-1}W_2(A_3-B_3)+(W_1A_1W_1)^{-1}(W_1A_2+W_2A_3)W_3(A_3-B_3)$,
$W_2A_3W_3(A_3-B_3)W_3=0$, $A_3W_3\in\B(N[((AW)^d)^*])^{qnil}$ and
$W_3A_3\in\B(N[((WA)^d)^*])^{qnil}$.\end{itemize}
\end{corollary}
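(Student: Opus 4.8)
The plan is to deduce the corollary directly from Theorem \ref{te9-w-weak-group} and Theorem \ref{te10-w-weak-group} by intersecting the two matrix characterizations. By definition, $A\leq^{\otimes,W}B$ means simultaneously $A\leq^{\otimes,W,r}B$ and $A\leq^{\otimes,W,l}B$, so (i) is equivalent to the conjunction of Theorem \ref{te9-w-weak-group}(iii) and Theorem \ref{te10-w-weak-group}(iv). Both of those describe $B$ via the \emph{same} block decomposition of $A$ and $W$ induced by the orthogonal sums $X=R((WA)^d)\oplus N[((WA)^d)^*]$ and $Y=R((AW)^d)\oplus N[((AW)^d)^*]$, so the task reduces to reconciling the two sets of constraints on the blocks $B_1,B_2,B_3,B_4$ of $B$.

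First I would read off the left-hand ($r$) constraints: $B_1=A_1$, $B_4=0$, and the ``compatibility'' relation $(A_2-B_2)W_3+(A_1W_1)^{-1}(A_1W_2+A_2W_3)(A_3-B_3)W_3=0$. Then I would impose the $l$-constraints from Theorem \ref{te10-w-weak-group}(iv): there $B_1=A_1-W_1^{-1}W_2B_4$ and $W_3B_4=0$, and the $(1,2)$-block is forced to equal $B_2=A_2+W_1^{-1}W_2(A_3-B_3)+(W_1A_1W_1)^{-1}(W_1A_2+W_2A_3)W_3(A_3-B_3)$. Now $B_4=0$ from the $r$-side makes $B_1=A_1-W_1^{-1}W_2B_4=A_1$ automatic and makes $W_3B_4=0$ vacuous, so the $l$-side pins $B_2$ to the displayed closed form. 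The remaining work is to substitute this explicit $B_2$ into the $r$-side compatibility relation $(A_2-B_2)W_3+(A_1W_1)^{-1}(A_1W_2+A_2W_3)(A_3-B_3)W_3=0$ and simplify. Computing $A_2-B_2=-W_1^{-1}W_2(A_3-B_3)-(W_1A_1W_1)^{-1}(W_1A_2+W_2A_3)W_3(A_3-B_3)$, multiplying on the right by $W_3$, and adding $(A_1W_1)^{-1}(A_1W_2+A_2W_3)(A_3-B_3)W_3$, the terms not involving $W_3(A_3-B_3)W_3$ should cancel (using $A_1W_1\cdot W_1^{-1}=A_1\cdot 1$-type identities and the fact that $(W_1A_1W_1)^{-1}=W_1^{-1}(A_1W_1)^{-1}$), leaving a single surviving term proportional to $W_2A_3W_3(A_3-B_3)W_3$; setting it to zero yields exactly the condition $W_2A_3W_3(A_3-B_3)W_3=0$ stated in (ii).

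The main obstacle I anticipate is purely bookkeeping: the blocks $A_1,W_1$ act \emph{between} the two different Hilbert-space summands, so one must be careful that products like $W_1^{-1}$, $(A_1W_1)^{-1}$, $(W_1A_1W_1)^{-1}$ are composed in the correct order and between the correct spaces, and that ``$1$'' means the identity on whichever summand is appropriate. Keeping the identity $(W_1A_1W_1)^{-1}=W_1^{-1}(A_1W_1)^{-1}=(W_1A_1)^{-1}W_1^{-1}$ at hand will make the cancellations transparent. The converse direction (ii) $\Rightarrow$ (i) is then immediate: the given block form of $B$ satisfies both Theorem \ref{te9-w-weak-group}(iii) (take $B_4=0$ and check the compatibility relation collapses to $W_2A_3W_3(A_3-B_3)W_3=0$, which is assumed) and Theorem \ref{te10-w-weak-group}(iv) (the formula for $B_2$ is exactly the one assumed, and $W_3B_4=0$ holds trivially), hence $A\leq^{\otimes,W,r}B$ and $A\leq^{\otimes,W,l}B$, i.e.\ $A\leq^{\otimes,W}B$. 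I would present the argument as a short paragraph citing the two theorems, then display the one-line substitution showing the two compatibility conditions merge into $W_2A_3W_3(A_3-B_3)W_3=0$, and leave the remaining verifications as ``direct computation.''
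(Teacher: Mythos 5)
Your proposal is correct and is exactly the route the paper takes: the corollary is stated as the intersection of Theorem \ref{te9-w-weak-group}(iii) and Theorem \ref{te10-w-weak-group}(iv), with $B_4=0$ forcing $B_1=A_1$, the $l$-side fixing $B_2$, and substitution into the $r$-side compatibility relation collapsing (after the cancellations you describe) to $(W_1A_1W_1)^{-1}W_2A_3W_3(A_3-B_3)W_3=0$, equivalently $W_2A_3W_3(A_3-B_3)W_3=0$ by invertibility of $W_1A_1W_1$. One small slip in your parenthetical: the correct identities are $(W_1A_1W_1)^{-1}=(A_1W_1)^{-1}W_1^{-1}=W_1^{-1}(W_1A_1)^{-1}$ (not $W_1^{-1}(A_1W_1)^{-1}$), though the cancellation you actually need, $(W_1A_1W_1)^{-1}W_1A_2=(A_1W_1)^{-1}A_2$, does hold.
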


\begin{corollary} Let
$A\in\B(X)^d$ and $B\in\B(X)$. Then the following statements are
equivalent:
\begin{itemize}
\item[{\rm (i)}] $A\leq^{\otimes}B$;

\item[{\rm (ii)}] the following matrix representations with
respect to the orthogonal sum $X=R(A^d)\oplus N[(A^d)^*]$ hold
$$A=\sbmatrix{cc} A_1&A_2\\0&A_3\endsbmatrix,
\ B=\sbmatrix{cc}
A_1&A_2+A_1^{-1}A_2(A_3-B_3)\\0&B_3\endsbmatrix,$$ where
$A_1\in\B(R(A^d))$ is invertible, $A_3\in\B(N[(A^d)^*])$ is
quasinilpotent.
\end{itemize}
\end{corollary}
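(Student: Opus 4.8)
The plan is to derive this corollary directly from the two preceding theorems by specializing to the case $X=Y$ and $W=I$. Under this substitution, the relation $A\leq^{\otimes,W}B$ becomes $A\leq^{\otimes}B$, since $AW\leq^{\otimes}BW$ and $WA\leq^{\otimes}WB$ both collapse to $A\leq^{\otimes}B$. The matrix decompositions simplify as well: the two orthogonal sums $X=R((WA)^d)\oplus N[((WA)^d)^*]$ and $Y=R((AW)^d)\oplus N[((AW)^d)^*]$ both become $X=R(A^d)\oplus N[(A^d)^*]$, the invertible blocks $A_1$ and $W_1$ degenerate to $A_1$ (invertible on $R(A^d)$) and the identity respectively, and the quasinilpotent conditions on $A_3W_3$ and $W_3A_3$ both reduce to $A_3\in\B(N[(A^d)^*])$ being quasinilpotent. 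This is exactly the setting of Corollary~\ref{cor1-weak-group}.

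First I would invoke the corollary combining Theorem~\ref{te9-w-weak-group} and Theorem~\ref{te10-w-weak-group} (the penultimate statement) with $W=I$. Setting $W_1=I$ and $W_2=0$, the formula $B_2=A_2+W_1^{-1}W_2(A_3-B_3)+(W_1A_1W_1)^{-1}(W_1A_2+W_2A_3)W_3(A_3-B_3)$ reduces to $B_2=A_2+A_1^{-1}A_2(A_3-B_3)$, which is precisely the entry appearing in the statement. The side condition $W_2A_3W_3(A_3-B_3)W_3=0$ is automatically satisfied when $W_2=0$, so it drops out entirely. Thus the characterization of $A\leq^{\otimes}B$ becomes exactly the matrix representation in part (ii), and the equivalence follows.

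Alternatively, one could prove the corollary from scratch in a self-contained way, mirroring the proof of Theorem~\ref{te9-w-weak-group}: write $A$ as in (\ref{jed-oper-A123}), write $B=\sbmatrix{cc}B_1&B_2\\B_4&B_3\endsbmatrix$ with respect to $X=R(A^d)\oplus N[(A^d)^*]$, use the representation $A^\otimes=\sbmatrix{cc}A_1^{-1}&A_1^{-2}A_2\\0&0\endsbmatrix$ from Corollary~\ref{cor1-weak-group}, and then expand the four conditions $AA^\otimes=BA^\otimes$, $A^\otimes A=A^\otimes B$ blockwise. The equation $AA^\otimes=BA^\otimes$ forces $B_1=A_1$ and $B_4=0$, after which $A^\otimes A=A^\otimes B$ yields $A_1^{-1}(A_2-B_2)+A_1^{-2}A_2(A_3-B_3)=0$, equivalently $B_2=A_2+A_1^{-1}A_2(A_3-B_3)$. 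The converse direction is a direct verification.

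The only mild obstacle is bookkeeping: one must check that the degenerate case $W=I$ does not introduce any hidden subtleties in the block structure — in particular that the two distinct range-based decompositions genuinely coincide when $W=I$ (which holds because $(AW)^d=(WA)^d=A^d$ and $P_{R((AW)^d)}=P_{R((WA)^d)}=P_{R(A^d)}$), and that the quasinilpotency hypotheses merge correctly. Since all of this is immediate from the finite specialization $W=I$ already used throughout the paper (as in Corollary~\ref{cor1-weak-group} and Corollary~\ref{cor4-weak-group}), the proof reduces to a single sentence of the form: ``This follows from the previous corollary by taking $X=Y$ and $W=I$.''
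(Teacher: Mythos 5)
Your proposal is correct and matches the paper's approach: the paper gives no separate proof for this corollary, deriving it (together with the preceding one) by combining Theorems 3.2 and 3.3 and then specializing to $X=Y$, $W=I$, which is exactly your first route, and your reduction $B_2=A_2+A_1^{-1}A_2(A_3-B_3)$ with the side condition vanishing is the right computation. The self-contained alternative you sketch is also sound but unnecessary here.
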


Dijana Mosi\'c

\bigskip

Faculty of Sciences and Mathematics, University of Ni\v s, P.O.
Box 224, 18000 Ni\v s, Serbia

\bigskip

{\it E-mail:} {\tt dijana@pmf.ni.ac.rs}

\bigskip

Daochang Zhang

\bigskip
College of Sciences, Northeast Electric Power University, Jilin,
P.R. China.

\bigskip
{\it E-mail:} {\tt daochangzhang@126.com}

\end{document}